\documentclass{amsart}

\usepackage{amssymb,amsmath,color,ytableau,graphicx,float}
\usepackage{amsthm}
\usepackage{url}
\usepackage{tikz}

\definecolor{red}{rgb}{1,0,0}
\definecolor{blue}{rgb}{.2,.2,.8}
\newcommand{\ccom}[1]{{\color{teal}{CB: #1}} }

\def\eu{\textrm{eu}}
\def\ed{\textrm{ed}}
\def\ou{\textrm{ou}}
\def\od{\textrm{od}}
\def\on{\textrm{ond}}
\def\en{\textrm{end}}

\newtheorem{theorem}{Theorem}[section]
\newtheorem{corollary}[theorem]{Corollary}
\newtheorem{proposition}{Proposition}
\newtheorem{conjecture}{Conjecture}

\theoremstyle{definition}

\newtheorem{example}{Example}
\newtheorem{remark}{Remark}

\newcommand{\ds}{\displaystyle}

\begin{document}

\title[Inequalities for PSP-partition numbers]{Combinatorial proofs of inequalities involving the number of partitions with parts separated by parity}
\author{Cristina Ballantine}\address{Department of Mathematics and Computer Science\\ College of the Holy Cross \\ Worcester, MA 01610, USA \\} 
\email{cballant@holycross.edu} 
\author{Amanda Welch} \address{Department of Mathematics and Computer Science\\ Eastern Illinois University \\ Charleston, IL 61920, USA \\} \email{arwelch@eiu.edu} 

\begin{abstract} We consider the number of various partitions of $n$ with parts separated by parity and prove combinatorially several inequalities between these numbers. For example, we show that for $n\geq 5$ we have $p_{\od}^{\eu}(n)<p_{\ed}^{\ou}(n)$, where $p_{\od}^{\eu}(n)$ is the number of partitions of $n$ with odd parts distinct and even parts unrestricted and all odd parts less than all even parts and $p_{\ed}^{\ou}(n)$ is the number of partitions of $n$ with even parts distinct and odd parts unrestricted and all even parts less than all odd parts. We also prove a conjectural inequality of Fu and Tang involving partitions with parts separated by parity with restrictions on the multiplicity of parts. 
\end{abstract}

\maketitle

{\bf Keywords:} partitions with parts separated by parity, combinatorial injections. 

{\bf MSC 2020:} 05A17, 05A20, 11P81

\section{Introduction}

A partition of a non-negative integer $n$ is a non-increasing sequence of  positive integers that sum  to $n$. We write a partition of $n$  as $\lambda = (\lambda_1, \lambda_2, \ldots, \lambda_{\ell})$ with $\lambda_1\geq \lambda_2\geq \cdots \geq \lambda_\ell$ and $\lambda_1 + \lambda_2 + \dotsm + \lambda_{\ell} = n$. We refer to the terms $\lambda_i$ as the parts of $\lambda$. 

In this article, we consider the number of partitions of $n$ with parts separated by parity. These partitions were introduced by Andrews in \cite{A19, A18}.  Because the numbers of partitions of $n$ with parts separated by parity (in different combinations) display modular properties, several authors have studied  different aspects of their behavior, see for example  \cite{BCN, BD, BE, BJS, CC, C, C2021, FT, GJ, MSS, P, RB,  RS}.  Following the notation in \cite{A19}, we denote by $\mathcal P_{\textrm{yz}}^{\textrm{wx}}(n)$ the set of partitions of $n$ with parts of type $w$ satisfying condition $x$ or of type $y$ satisfying 
condition $z$ and all parts of type $w$ satisfying condition $x$ are larger than all parts  of type $y$ satisfying 
condition $z$. Here, the symbols $w$ and $y$ satisfy $w\neq y$ and  are $\textrm e$ (for even) or $\textrm o$ (for odd).  The symbols $x$ and $y$ are  $\textrm u$ (for unrestricted) or $\textrm d$  (for distinct). For example, $\mathcal P_{\ou}^{\ed}(n)$ is the set of partitions of $n$ in which all even parts are greater than all odd parts, odd parts are unrestricted and even parts are distinct. We set  $p_{\textrm{yz}}^{\textrm{wx}}(n):=|\mathcal P_{\textrm{yz}}^{\textrm{wx}}(n)|$. 

In this article, we allow partitions in $\mathcal P_{yz}^{wx}(n)$ to have no parts of type $y$ or no parts of type $w$. 

\begin{example}
    Let $n = 6$. Then 
    $$\mathcal P_{\ou}^{\ed}(6) = \{(6), (5, 1), (4, 2), (4, 1^2), (3^2), (3, 1^3), (2, 1^5), (1^7)\}.$$ Thus, $p_{\ou}^{\ed}(6) = 8$. 
\end{example}

Bringmann, Craig and Nazaroglu \cite{BCN} studied the asymptotic behavior of the eight sequences $p_{yz}^{wx}(n)$ (for all possible combinations of $w,x,y,z$). At the end of their article they conclude that for sufficiently large $n$, \begin{equation}\label{chain} p_{\ed}^{\od}(n) <p^{\ed}_{\od}(n) <p^{\eu}_{\od}(n)  <p_{\eu}^{\od}(n)
<p_{\ed}^{\ou}(n)<p_{\eu}^{\ou}(n) <p^{\ed}_{\ou}(n) <p^{\eu}_{\ou}(n).\end{equation}

The second, fifth, and last inequalities are immediate since $\mathcal P^{\ed}_{\od}(n) \subseteq \mathcal P^{\eu}_{\od}(n)$, $\mathcal P_{\ed}^{\ou}(n)\subseteq \mathcal P_{\eu}^{\ou}(n)$ and $\mathcal P^{\ed}_{\ou}(n) \subseteq \mathcal P^{\eu}_{\ou}(n)$. Bringmann, Craig and Nazaroglu asked for combinatorial proofs of the other inequalities. 

In section \ref{ineq}, we give combinatorial injections proving the following inequalities:\begin{align}\label{i1} p_{\ed}^{\od}(n) &<p^{\ed}_{\od}(n), \ \ \text{for } n\geq 11;\\ \label{i2} p_{\eu}^{\ou}(n) & <p_{\ou}^{\eu}(n), \ \ \text{for } n\geq 3; \\ \label{i3} p_{\od}^{\eu}(n) & <p_{\ed}^{\ou}(n), \ \ \text{for } n\geq 5; \\ \label{i4} p^{\od}_{\eu}(n) & <p^{\ed}_{\ou}(n), \ \ \text{for } n\geq 2;\\ \label{i5} p_{\od}^{\ed}(n) & <p_{\eu}^{\od}(n), \ \ \text{for } n\geq 8. \end{align}

In \cite{FT}, Fu and Tang considered partitions with parts separated by parity with additional conditions on the multiplicity of parts. We adapt their notation to match the notation of this article. 
    Let $\overline{\mathcal P}_{\eu}^{\ou}(n)$ be the subset of partitions in $\mathcal P_{\eu}^{\ou}(n)$ in which the largest even part appears with odd multiplicity and all other parts appear with even multiplicity. Note that a partition of $n$  with only odd parts is considered to be in  $\overline{\mathcal P}_{\eu}^{\ou}(n)$ with a single part equal to $0$ (and multiplicity one). We  set $\overline p_{\eu}^{\ou}(n):=|\overline{\mathcal P}_{\eu}^{\ou}(n)|$. 
Similarly, let $\overline{\mathcal P}^{\eu}_{\ou}(n)$ be the subset of partitions in $\mathcal P^{\eu}_{\ou}(n)$ in which both even and odd parts occur, the largest even part and the largest odd part each appear with odd multiplicity and all other parts appear with even multiplicity. We  set $\overline p^{\eu}_{\ou}(n):=|\overline{\mathcal P}^{\eu}_{\ou}(n)|$.
Fu and Tang made the following conjecture. \begin{conjecture} \label{conj1} Let  $n\geq 3$. Then,   $$\overline p_{\eu}^{\ou}(2n)<\overline p^{\eu}_{\ou}(2n+1).$$\end{conjecture}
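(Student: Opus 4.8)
The plan is to prove Conjecture \ref{conj1} by constructing, for each $n\ge 3$, an explicit injection $\Phi:\overline{\mathcal P}_{\eu}^{\ou}(2n)\hookrightarrow \overline{\mathcal P}^{\eu}_{\ou}(2n+1)$ that raises the size by exactly $1$, and then naming one element of the target missed by $\Phi$. Both classes share the same structural skeleton: a distinguished largest part (of one fixed parity) occurring with odd multiplicity, and every remaining part occurring with even multiplicity. The essential asymmetry is that in $\overline{\mathcal P}_{\eu}^{\ou}$ the odd parts lie \emph{above} the even parts, whereas in $\overline{\mathcal P}^{\eu}_{\ou}$ the even parts lie above the odd parts. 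I would split $\overline{\mathcal P}_{\eu}^{\ou}(2n)$ into three cases according to which parities actually occur: (a) only even parts; (b) only odd parts (the ``appended $0$'' partitions); (c) both parities.

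Cases (a) and (b) are routine and I would dispatch them first. In case (a) the partition is a single even block, and I send it to itself with one extra part $1$ adjoined; the image has even block unchanged and odd block $(1)$, so it lies in $\overline{\mathcal P}^{\eu}_{\ou}(2n+1)$. In case (b) the partition equals $2\nu$ for a partition $\nu$ into odd parts; if its largest part is $2j+1$ (necessarily of even multiplicity $2a$), I replace a single copy of $2j+1$ by $2j+2$. This creates an even block consisting of the one part $2j+2$ of multiplicity one on top, lowers the multiplicity of the largest odd part to the odd number $2a-1$, raises the weight by exactly $1$, and manifestly lands in $\overline{\mathcal P}^{\eu}_{\ou}(2n+1)$. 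Each map is easily inverted on its image, and I would check that the two image sets are disjoint for $n\ge 2$; the single coincidence, $(1^2)\mapsto(2,1)\leftarrow(2)$, occurs only at $n=1$ and accounts (together with case (c)) for the failure of the strict inequality below $n=3$.

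The main obstacle is case (c), the genuinely mixed partitions, where the large/small roles of the two parities are exactly opposite on the two sides, so no value-preserving rearrangement can succeed. Here I would build a parity-exchanging map: the large odd block (all multiplicities even) must become the large even block, and the small even block (largest part of odd multiplicity) together with the extra unit of weight must become the small odd block. The arithmetic enabling this is the fact, provable from Euler's identity $\sum_{L\ge 0}x^L\big/\prod_{l=1}^{L}(1-x^{2l})=\prod_{k\ge 0}(1-x^{2k+1})^{-1}$, that the even blocks of weight $2N$ (even parts, largest with odd multiplicity, the rest with even multiplicity) are equinumerous with the partitions of $N$ into odd parts. The difficulty I expect to fight is that a naive block-by-block swap cannot be injective: the small even block of weight $2N_2$ admits $p_{\mathrm{odd}}(N_2)$ shapes, while the available small odd blocks of weight $2N_2+1$ number only $N_2+1$, and $p_{\mathrm{odd}}(N_2)$ eventually dwarfs $N_2+1$. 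Consequently the injection must exploit the order constraint coupling the two blocks (all odd parts exceed the threshold set by the largest even part), redistributing boxes \emph{across} the even/odd cut rather than treating the blocks independently; making this redistribution simultaneously injective, weight-correct, and compatible with cases (a) and (b) is the crux of the argument.

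As a safeguard and a source of guidance I would also compute the relevant generating functions. Setting $x=q^2$ and summing over the position of the largest even part, a short calculation (again via Euler's identity) gives the clean product
\[
\sum_{n\ge 0}\overline p_{\eu}^{\ou}(2n)\,x^n=\frac{\prod_{k\ge 1}(1-x^{2k})}{\prod_{k\ge 0}(1-x^{2k+1})^{2}},
\]
whereas the companion series $\sum_{n\ge 0}\overline p^{\eu}_{\ou}(2n+1)\,x^n$ does not visibly factor. This confirms that the statement is a genuine inequality rather than a masked identity, and it pins down $\overline p_{\eu}^{\ou}(2n)$ explicitly; the residual positivity $\overline p^{\eu}_{\ou}(2n+1)-\overline p_{\eu}^{\ou}(2n)>0$ can then be verified by hand for the first few values of $n$ and, for large $n$, follows from the injection $\Phi$ itself. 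Finally, strictness for every $n\ge 3$ follows by naming one partition in $\overline{\mathcal P}^{\eu}_{\ou}(2n+1)$ outside the range of $\Phi$.
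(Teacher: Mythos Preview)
Your cases (a) and (b) coincide with the paper's Cases 2 and 1, respectively, and are handled correctly. The gap is case (c): you never actually construct $\Phi$ on mixed partitions. You describe a target shape (``the large odd block must become the large even block, the small even block together with the extra unit must become the small odd block''), then argue that a naive block-by-block swap fails for cardinality reasons, and finally declare that ``making this redistribution simultaneously injective, weight-correct, and compatible with cases (a) and (b) is the crux of the argument'' --- and stop. Everything after that point (the Euler-identity computation, the product form for $\sum \overline p_{\eu}^{\ou}(2n)x^n$) is diagnostic rather than constructive, and your closing sentence ``for large $n$, follows from the injection $\Phi$ itself'' invokes a map you have not defined on the mixed case. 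As written, the proposal is a plan for (c), not a proof.

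The paper's resolution of (c) is much simpler than the block-swap you are steering toward, and avoids the counting obstruction entirely. Given a mixed $\lambda\in\overline{\mathcal P}_{\eu}^{\ou}(2n)$, the largest odd part has even multiplicity, so $\lambda_1=\lambda_2$ is odd; the paper sets
\[
\varphi_5(\lambda)=(\lambda_1+\lambda_2,\ \lambda_3-1,\ \ldots,\ \lambda_{\ell(\lambda)}-1)\cup(1^{\ell(\lambda)-1}).
\]
The single merged part $\lambda_1+\lambda_2\equiv 2\pmod 4$ becomes the unique even part of the image (multiplicity one, hence odd), the global $-1$ shift swaps parities of all remaining parts while preserving their multiplicity profile, and the $\ell(\lambda)-1$ appended $1$'s supply the required odd multiplicity on the largest odd part and restore the weight to $2n+1$. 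The image $\mathcal E_3(2n+1)$ is characterized by $\mu_1\equiv 2\pmod 4$, $\mu_1\ge 2\mu_2+2$, $\ell_o(\mu)\ge 3$, and $m_\mu(1)\ge \ell_{>1}(\mu)$, which makes disjointness from the images of (a) and (b) and invertibility immediate. Strictness is then witnessed by $(4(k-1),1^5)$ when $2n+1=4k+1$ and $(4k,1^3)$ when $2n+1=4k+3$. The moral is that you do not need to transmute the whole odd block into an even block; manufacturing a \emph{single} large even part by merging $\lambda_1$ with $\lambda_2$ already suffices.
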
 We prove the conjecture in section \ref{sec_conj}.

\section{Definitions and Notation} Throughout the article we use the following notation. Given a partition $\lambda$, the length of $\lambda$, denoted $\ell(\lambda)$,  is the number of parts in  $\lambda$. 
We denote by $\lambda^o$, respectively $\lambda^e$,  the partition whose parts are precisely the odd, respectively even,  parts of $\lambda$. We denote by $\ell_o(\lambda)$, respectively $\ell_e(\lambda)$, the number of odd, respectively even,  parts in $\lambda$, and when $\lambda$ contains both even and odd parts, $\ell_m(\lambda)$ denotes the minimum of $\ell_o(\lambda)$ and $\ell_e(\lambda)$. Moreover, $\lambda_s^o$, respectively $\lambda_s^e$, denotes the last part in $\lambda^o$, respectively $\lambda^e$. By $\lambda^o_1$, respectively $\lambda^e_1$, we mean the first part in $\lambda^o$, respectively $\lambda^e$.  If $j$ is a part of $\lambda$, we denote by $a_{>j}$  the smallest part in $\lambda$ that is larger than $j$, if such a part exists. We write $m_\lambda(j)$ for the multiplicity of $j$ in $\lambda$, i.e., the number of times $j$ occurs as a part in $\lambda$.  We also write  $\ell_{>j}(\lambda)$, respectively $\ell_{\geq j}(\lambda)$, for the number of parts greater than, respectively greater than or equal to,  $j$ in $\lambda$. Occasionally, we use the frequency notation for partitions and write $\lambda=(i_1^{m_1}, i_2^{m_2}, \ldots, i_k^{m_k})$, where  $m_j=m_\lambda(i_j)$.  For convenience, if $j>\ell(\lambda)$, we set $\lambda_j=0$. The set of  partitions of $n$ with distinct parts is denoted by  $\mathcal D(n)$.  

If $\mathcal A(n)$ is the set of partitions of $n$ satisfying certain properties, we define $$\mathcal A: = \bigcup_{n\geq0}\mathcal A(n).$$

We identify partitions with their multiset of parts. If $a$ is a positive integer, we write $a\in \lambda$ to mean that $a$ is a part of $\lambda$.  If   $\lambda$ and $\mu$ are partitions, by $\lambda\cup \mu$ and $\lambda\setminus \mu$ we mean the obvious operations on the multisets of parts of $\lambda$ and $\mu$. Moreover, $\lambda\setminus \mu$ is only defined if $\mu \subseteq \lambda$ as multisets. 

The {Ferrers diagram} of a partition $\lambda=(\lambda_1, \lambda_2, \ldots, \lambda_{\ell(\lambda)})$ is an array of left justified boxes such that the $i$th row from the top contains $\lambda_i$ boxes. We abuse notation and use $\lambda$ to mean a partition or its Ferrers diagram. 
\begin{example} The Ferrers diagram of $\lambda=(5,4, 3, 3, 2)$  is shown below.  \medskip

 \begin{center} \ytableausetup{smalltableaux}{\ydiagram[*(white)]
{5,4,3,3,2}}\end{center}
 \end{example}

\section{Combinatorial proofs of inequalities \eqref{i1} --  \eqref{i5}} \label{ineq}

\subsection{The inequality $p_{\ed}^{\od}(n) <p^{\ed}_{\od}(n)$} \label{s1}

Let $n\geq 0$. We create an injection $$\varphi_1:\mathcal P_{\ed}^{\od}(n)\to\mathcal P^{\ed}_{\od}(n).$$ Let $\lambda\in \mathcal P_{\ed}^{\od}(n).$ If $\lambda^o=\emptyset$ or $\lambda^e=\emptyset$, define $\varphi_1(\lambda):=\lambda$. If $\lambda^e, \lambda^o\neq \emptyset$, define $\varphi_1(\lambda)$ to be the partition obtained from $\lambda$ by subtracting one from each of the last $\ell_m(\lambda)$ parts of  $\lambda$ and adding one to each of the first $\ell_m(\lambda)$ parts of  $\lambda$. In each case, the partition $\varphi_1(\lambda)\in \mathcal P^{\ed}_{\od}(n)$. Moreover, if $\lambda^o, \lambda^e\neq \emptyset$, the smallest even part of  $\varphi_1(\lambda)$ differs by at least three from the largest odd part of $\varphi_1(\lambda)$. 

If we denote by  $\widetilde{\mathcal P}^{\ed}_{\od}(n)$ the subset of  partitions $\mu\in \mathcal P^{\ed}_{\od}(n)$ with $\mu^o=\emptyset$ or $\mu^e=\emptyset$, or $\mu^o, \mu^e\neq \emptyset$ and $\mu^e_s-\mu^o_1\geq 3$, then $\varphi_1: \mathcal P_{\ed}^{\od}(n) \to \widetilde{\mathcal P}^{\ed}_{\od}(n)$ is a bijection. To see that $\varphi_1$ is invertible, let $\mu \in \widetilde{\mathcal P}^{\ed}_{\od}(n)$. If $\mu^o=\emptyset$ or $\mu^e=\emptyset$, then $\varphi_1^{-1}(\mu)=\mu$. If $\mu^o, \mu^e\neq \emptyset$, then $\varphi_1^{-1}(\mu)$ is the partition obtained from $\mu$ by subtracting one from each of the first $\ell_m(\mu)$ parts of $\mu$ and adding one to each of the last $\ell_m(\mu)$ parts of $\mu$.

\begin{example}
Let $n = 35$ and $\lambda = (17, 12, 6)$. Then $\ell_m(\lambda) = \ell_o(\lambda)=1$. Thus, $\varphi_1(\lambda) = (18, 12, 5)$.
\end{example}

We have the following interpretation of  $p^{\ed}_{\od}(n)-p_{\ed}^{\od}(n)$.

\begin{proposition} \label{P1} Let $n\geq 0$. Then, $p^{\ed}_{\od}(n)-p_{\ed}^{\od}(n)$ equals the number of partitions $\lambda \in \mathcal P^{\ed}_{\od}(n)$ with $\lambda^o, \lambda^e\neq \emptyset$ and   $\lambda^e_s-\lambda^o_1=1$.
\end{proposition}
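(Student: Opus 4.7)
The plan is to read the proposition directly off the bijection $\varphi_1$ that was constructed just before the statement. Since it has already been verified that $\varphi_1 : \mathcal P_{\ed}^{\od}(n) \to \widetilde{\mathcal P}^{\ed}_{\od}(n)$ is a bijection, we immediately obtain
\[
p^{\ed}_{\od}(n)-p_{\ed}^{\od}(n) = \bigl|\mathcal P^{\ed}_{\od}(n)\bigr| - \bigl|\widetilde{\mathcal P}^{\ed}_{\od}(n)\bigr| = \bigl|\mathcal P^{\ed}_{\od}(n)\setminus \widetilde{\mathcal P}^{\ed}_{\od}(n)\bigr|,
\]
so it suffices to identify the complement $\mathcal P^{\ed}_{\od}(n)\setminus \widetilde{\mathcal P}^{\ed}_{\od}(n)$ as the set described in the proposition.

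To do so, I would unpack the definition of $\widetilde{\mathcal P}^{\ed}_{\od}(n)$: a partition $\lambda \in \mathcal P^{\ed}_{\od}(n)$ fails to lie in $\widetilde{\mathcal P}^{\ed}_{\od}(n)$ precisely when both $\lambda^o\neq\emptyset$ and $\lambda^e\neq\emptyset$, while $\lambda^e_s-\lambda^o_1\leq 2$. Here the key observation is a parity check: $\lambda^e_s$ is even and $\lambda^o_1$ is odd, so $\lambda^e_s-\lambda^o_1$ is odd. Combined with $\lambda^e_s>\lambda^o_1$ (the defining inequality for $\mathcal P^{\ed}_{\od}$, since even parts are larger than odd parts), the difference is a positive odd integer, hence at least $1$. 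Therefore the only way to have $\lambda^e_s-\lambda^o_1\leq 2$ is $\lambda^e_s-\lambda^o_1=1$.

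Combining these two observations yields exactly the characterization in the proposition, completing the proof. There is really no substantive obstacle here; the only thing to watch is the small parity argument that rules out difference $2$, and the fact that the bijection $\varphi_1$ has already been established, so nothing further must be checked about its image.
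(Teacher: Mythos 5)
Your proposal is correct and follows essentially the same route as the paper: the proposition is read off from the bijection $\varphi_1:\mathcal P_{\ed}^{\od}(n)\to\widetilde{\mathcal P}^{\ed}_{\od}(n)$, with the complement of $\widetilde{\mathcal P}^{\ed}_{\od}(n)$ identified via the parity-and-positivity observation that $\lambda^e_s-\lambda^o_1$ is a positive odd number, so failing $\lambda^e_s-\lambda^o_1\geq 3$ forces it to equal $1$. Nothing is missing.
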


 One can verify directly that there is no partition $\lambda \in \mathcal P^{\ed}_{\od}(10)$ with both even and odd parts such that $\lambda^e_s-\lambda^o_1=1$. If $n\geq 11$, there is a partition $\lambda \in \mathcal P^{\ed}_{\od}(n)$ with both even and odd parts such that $\lambda^e_s-\lambda^o_1=1$. For example, 
\begin{itemize}
\item[(i)] if $n=4k$, $k\geq 3$, then $\lambda=(2k, 2k-1, 1)$;

\item[(ii)] if $n=13$, then $\lambda=(6,4,3)$; 
\\ if $n=17$, then $\lambda=(10, 4,3)$; 
\\ if $n=4k+1$, $k\geq 5$, then $\lambda=(2k-2, 2k-3, 5, 1)$;

\item[(iii)] if $n=4k+2$, $k\geq 3$, then $\lambda=(2k, 2k-1, 3)$;

\item[(iv)] if $n=4k+3$, $k\geq 2$, then $\lambda=(2k+2, 2k+1)$.
\end{itemize}

The combinatorial injection $\varphi_1$ together with the discussion  above proves the following theorem. 
\begin{theorem}\label{T1} The inequality $p_{\ed}^{\od}(n) \leq p^{\ed}_{\od}(n)$ holds for all $n\geq 0$. Moreover, $p_{\ed}^{\od}(n) < p^{\ed}_{\od}(n)$ for $n\geq 11$.
\end{theorem}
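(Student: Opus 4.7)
The plan is to combine the injection $\varphi_1$ with Proposition \ref{P1}. The weak inequality $p_{\ed}^{\od}(n)\leq p^{\ed}_{\od}(n)$ for every $n\geq 0$ is immediate from the fact that $\varphi_1:\mathcal P_{\ed}^{\od}(n)\to\mathcal P^{\ed}_{\od}(n)$ is an injection, which was established in the construction above. Nothing further is needed for this half.

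For the strict inequality when $n\geq 11$, Proposition \ref{P1} reduces the task to producing, for each such $n$, a single partition $\lambda\in \mathcal P^{\ed}_{\od}(n)$ with $\lambda^o,\lambda^e\neq\emptyset$ and $\lambda^e_s-\lambda^o_1=1$; any such $\lambda$ lies outside the image $\widetilde{\mathcal P}^{\ed}_{\od}(n)$ of $\varphi_1$ and therefore forces the image to be a proper subset of $\mathcal P^{\ed}_{\od}(n)$. I would carry out this construction by splitting into residue classes $n\pmod 4$ and writing down the explicit witnesses displayed in items (i)--(iv) preceding the theorem statement.

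In each family the verification is entirely routine: one checks that the listed parts sum to $n$, that both parities are represented with the parts distinct within each parity class, and that the smallest even part exceeds the largest odd part by exactly $1$ (which also enforces the ordering requirement defining $\mathcal P^{\ed}_{\od}$). The only mildly delicate point is the class $n\equiv 1\pmod 4$: the simplest two-part template $(2k,2k-1)$ has sum congruent to $3\pmod 4$, so further odd parts must be appended to reach $n\equiv 1\pmod 4$, and the generic four-part recipe $(2k-2,2k-3,5,1)$ is only valid once $k\geq 5$ (otherwise the odd parts would collide and fail distinctness). This is precisely why the residues $n=13$ and $n=17$ are handled by the sporadic witnesses $(6,4,3)$ and $(10,4,3)$, while $n=4k+1$ with $k\geq 5$ uses the generic recipe.

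Verifying that items (i)--(iv) between them cover every integer $n\geq 11$ is then a matter of checking the thresholds: $n=11$ falls in (iv), $n=12$ in (i), $n=13$ in (ii), and so on, with the smallest $k$ in each family matching the residue. No substantial obstacle arises; once the four residue cases are assembled, together with the immediate weak inequality from $\varphi_1$, the theorem follows.
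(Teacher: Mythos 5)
Your proposal is correct and follows essentially the same route as the paper: the weak inequality comes from the injection $\varphi_1$, and strictness for $n\geq 11$ comes from Proposition \ref{P1} together with the explicit witnesses (i)--(iv) in the four residue classes mod $4$. (One negligible imprecision: for $k=3$ the recipe $(2k-2,2k-3,5,1)$ fails because the odd part $5$ would exceed the even part $4$, not because of a distinctness collision, but this case is covered by the sporadic witness $(6,4,3)$ anyway.)
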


Inclusion shows that $p^{\ed}_{\od}(n)\leq p_{\ou}^{\ed}(n)$ for $n\geq 0$. Since for $n\geq 2$ we have  $(1^n)\in \mathcal P_{\ou}^{\ed}(n)\setminus \mathcal P^{\ed}_{\od}(n)$, we have $p^{\ed}_{\od}(n)< p_{\ou}^{\ed}(n)$ for $n\geq 2$. Hence, Theorem \ref{T1} gives the following inequality.  

\begin{corollary}  If $n\geq 2$, then  $p^{\od}_{\ed}(n) <p_{\ou}^{\ed}(n)$.
\end{corollary}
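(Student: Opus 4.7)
The plan is simply to concatenate two inequalities established earlier in this subsection. First, observe that the symbol $p^{\od}_{\ed}(n)$ in the corollary is just another typesetting of $p_{\ed}^{\od}(n)$: both denote $|\mathcal P^{\od}_{\ed}(n)|$, since in LaTeX the superscript and subscript on the same letter can be placed in either order. Hence it is enough to prove $p_{\ed}^{\od}(n) < p_{\ou}^{\ed}(n)$ for $n\geq 2$.

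To do this, I would invoke Theorem \ref{T1}, which supplies the weak inequality $p_{\ed}^{\od}(n) \leq p^{\ed}_{\od}(n)$ for every $n\geq 0$, and then use the containment $\mathcal P^{\ed}_{\od}(n)\subseteq \mathcal P_{\ou}^{\ed}(n)$ (obtained by weakening ``odd distinct'' to ``odd unrestricted'') together with the witness $(1^n)\in \mathcal P_{\ou}^{\ed}(n)\setminus\mathcal P^{\ed}_{\od}(n)$, which lies in the second set but not the first as soon as $n\geq 2$. This certifies the strict inequality $p^{\ed}_{\od}(n) < p_{\ou}^{\ed}(n)$ for $n\geq 2$. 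Chaining the two yields $p_{\ed}^{\od}(n) \leq p^{\ed}_{\od}(n) < p_{\ou}^{\ed}(n)$ for $n\geq 2$, which is the claim.

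Since every ingredient is already in place from the discussion immediately preceding the corollary, there is no real obstacle: the proof reduces to a one-line concatenation of previously verified inequalities, and the only thing worth making explicit is the identification of $p^{\od}_{\ed}(n)$ with $p_{\ed}^{\od}(n)$.
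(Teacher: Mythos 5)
Your proof is correct and follows exactly the paper's route: Theorem \ref{T1} gives $p_{\ed}^{\od}(n)\leq p^{\ed}_{\od}(n)$, and the inclusion $\mathcal P^{\ed}_{\od}(n)\subseteq \mathcal P_{\ou}^{\ed}(n)$ together with the witness $(1^n)$ for $n\geq 2$ gives the strict second step, which is precisely the argument preceding the corollary. The notational identification $p^{\od}_{\ed}(n)=p_{\ed}^{\od}(n)$ is also correct, so nothing is missing.
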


\smallskip

\subsection{The inequality $p_{\eu}^{\ou}(n) <p^{\eu}_{\ou}(n)$}

Let $n\geq 0$. The mapping $\varphi_1$ defined in the previous section can be extended to $\mathcal P_{\eu}^{\ou}(n)$. It becomes an injection $$\varphi_1:\mathcal P_{\eu}^{\ou}(n)\to\mathcal P^{\eu}_{\ou}(n).$$ We denote by $\widetilde{\mathcal P}^{\eu}_{\ou}(n)$ the subset of partitions $\mu\in \mathcal P^{\eu}_{\ou}(n)$ satisfying one of the following conditions
\begin{itemize} \item[(i)] $\mu^o=\emptyset$;
\item [(ii)] $\mu^e=\emptyset$;
\item [(iii)] $\mu^o, \mu^e\neq \emptyset$, $\ell_o(\mu)\leq \ell_e(\mu)$ and $\mu_{\ell_o(\mu)}-\mu_{\ell_o(\mu)+1}\geq 2$;
\item [(iv)] $\mu^o, \mu^e\neq \emptyset$, $\ell_e(\mu)< \ell_o(\mu)$ and $\mu_{\ell(\mu)-\ell_e(\mu)}-\mu_{\ell(\mu)-\ell_e(\mu)+1}\geq 2$.
\end{itemize} Then $\varphi_1:\mathcal P_{\eu}^{\ou}(n) \to \widetilde{\mathcal P}^{\eu}_{\ou}(n)$ is a bijection. To see that $\varphi_1$ is invertible, let $\mu \in \widetilde{\mathcal P}^{\eu}_{\ou}(n)$. If $\mu^o=\emptyset$ or $\mu^e=\emptyset$, then $\varphi_1^{-1}(\mu)=\mu$. If $\mu^o, \mu^e\neq \emptyset$, then $\varphi_1^{-1}(\mu)$ is the partition obtained from $\mu$ by subtracting one from each of the first $\ell_m(\mu)$ parts of $\mu$ and adding one to each of the last $\ell_m(\mu)$ parts of $\mu$.

\begin{example}
Let $n = 43$ and $\lambda = (11, 9^2, 6, 4, 2^2)$. Then $\ell_m(\lambda) = \ell_o(\lambda)=3$. Thus, $\varphi_1(\lambda) = \mu = (12, 10^2, 6, 3, 1^2)$. We note that this falls under Case (iii) as $\ell_e(\mu) > \ell_o(\mu)$ and $\mu_{\ell_o(\mu)}-\mu_{\ell_o(\mu)+1} = \mu_3 - \mu_4 = 10 - 6 = 4.$

Similarly, let $n = 44$ and $\lambda = (11, 9^2, 7, 4, 2^2)$. Then $\ell_m(\lambda) = \ell_e(\lambda)=3$. Thus, $\varphi_1(\lambda) = \mu = (12, 10^2, 7, 3, 1^2)$. We note that this falls under Case (iv) as $\ell_e(\mu) < \ell_o(\mu)$ and $\mu_{\ell(\mu)-\ell_e(\mu)}-\mu_{\ell(\mu)-\ell_e(\mu)+1} = \mu_4 - \mu_5 = 7 - 3 = 4.$
\end{example}

\begin{remark} We could interpret the excess $p^{\eu}_{\ou}(n)-p_{\eu}^{\ou}(n)$ in the manner of Proposition \ref{P1}, but the description is less elegant. 
\end{remark}

One can verify directly that $p^{\eu}_{\ou}(2)=p_{\eu}^{\ou}(2)=2$. If $n\geq 3$, the partition $(2, 1^{n-2})\in \mathcal P^{\eu}_{\ou}(n)$ is not in $\widetilde{\mathcal P}^{\eu}_{\ou}(n)$. Together with the combinatorial injection $\varphi_1$, this  proves the following theorem.

\begin{theorem} The inequality $p_{\eu}^{\ou}(n) \leq p^{\eu}_{\ou}(n)$ holds for all $n\geq 0$.  Moreover,  $p_{\eu}^{\ou}(n) <p^{\eu}_{\ou}(n)$ for $n\geq 3$.
    \end{theorem}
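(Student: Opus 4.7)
The plan is to combine the bijection $\varphi_1\colon \mathcal P_{\eu}^{\ou}(n)\to \widetilde{\mathcal P}^{\eu}_{\ou}(n)$ constructed in this section with the inclusion $\widetilde{\mathcal P}^{\eu}_{\ou}(n)\subseteq \mathcal P^{\eu}_{\ou}(n)$. Taking cardinalities immediately gives
$$p_{\eu}^{\ou}(n)=|\widetilde{\mathcal P}^{\eu}_{\ou}(n)|\leq |\mathcal P^{\eu}_{\ou}(n)|=p^{\eu}_{\ou}(n)\qquad\text{for all }n\geq 0,$$
which is the weak inequality of the theorem. This mirrors the structure already used for Theorem \ref{T1}.

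To upgrade to a strict inequality when $n\geq 3$, I would exhibit a single explicit element of $\mathcal P^{\eu}_{\ou}(n)\setminus \widetilde{\mathcal P}^{\eu}_{\ou}(n)$, and the natural candidate is $\mu=(2,1^{n-2})$. It lies in $\mathcal P^{\eu}_{\ou}(n)$ because its only even part $2$ exceeds all its odd parts. For the exclusion from $\widetilde{\mathcal P}^{\eu}_{\ou}(n)$: both $\mu^e$ and $\mu^o$ are nonempty, ruling out (i) and (ii); whether $n=3$ (case (iii), since $\ell_o(\mu)=\ell_e(\mu)=1$) or $n\geq 4$ (case (iv), since $\ell_e(\mu)=1<n-2=\ell_o(\mu)$), the prescribed consecutive-part gap evaluates to at most $1$, failing the required $\geq 2$ threshold. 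Hence $\mu$ lies outside the image of $\varphi_1$, producing the strict inequality.

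The base cases $n\in\{0,1,2\}$ can be settled by direct enumeration; in each the two counts coincide, which is consistent with the weak inequality claimed. I do not anticipate any genuine obstacle: the substantive combinatorial content—that $\varphi_1$ is a well-defined bijection from $\mathcal P_{\eu}^{\ou}(n)$ onto $\widetilde{\mathcal P}^{\eu}_{\ou}(n)$ under the four-case description—has already been carried out in the preceding discussion, and only the routine verification that $(2,1^{n-2})$ is not in the image remains. The mildest subtlety is simply remembering to split the witness check between $n=3$ and $n\geq 4$, since these fall under different cases in the definition of $\widetilde{\mathcal P}^{\eu}_{\ou}(n)$.
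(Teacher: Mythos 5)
Your proposal is correct and follows essentially the same route as the paper: it uses the bijection $\varphi_1:\mathcal P_{\eu}^{\ou}(n)\to\widetilde{\mathcal P}^{\eu}_{\ou}(n)$ for the weak inequality and the same witness $(2,1^{n-2})\in\mathcal P^{\eu}_{\ou}(n)\setminus\widetilde{\mathcal P}^{\eu}_{\ou}(n)$ for strictness when $n\geq 3$, with the case check for (iii) versus (iv) carried out correctly. (The separate enumeration for $n\in\{0,1,2\}$ is unnecessary, since the injection already gives the weak inequality there.)
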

\smallskip

\subsection{The inequality $p_{\od}^{\eu}(n) <p_{\ed}^{\ou}(n)$}

 In this section, given a partition  $\lambda\not \in \mathcal D$,  we denote by $\lambda_r$ the smallest part size in $\lambda$ with multiplicity greater than one; we also denote by $\lambda_{rr}$ the second smallest part size of $\lambda$ with multiplicity greater than one, if such a part exists. 

\medskip

Let $n\geq 0$. We create an injection $$\varphi_2:\mathcal P_{\od}^{\eu}(n)\to\mathcal P^{\ou}_{\ed}(n).$$ 

Let $\lambda\in \mathcal P_{\od}^{\eu}(n)$. We consider five cases.  For $1\leq i\leq 5$, we denote by $\mathcal B_i(n)$ the image  of the set of all partitions in Case $i$ under $\varphi_2$.  At the end of this section, we give examples for each case.\smallskip

\noindent \underline{Case 1:} If $\lambda^e=\emptyset$, define $\varphi_2(\lambda):=\lambda$. Then, $$\mathcal B_1(n)=\{\mu\in \mathcal P^{\ou}_{\ed}(n)\ \big| \ \mu=\mu^o, \, \mu\in \mathcal D\}.$$
\smallskip 

\noindent \underline{Case 2:} ($n$ even) If $\lambda^o=\emptyset$ and $\ell_e(\lambda) = 1$, define $\varphi_2(\lambda):=\lambda$. Then, $$\mathcal B_2(n)=\{(n)\}.$$

\smallskip

\noindent \underline{Case 3:} If $\ell_o(\lambda) \geq 2$ and $\ell_e(\lambda) = 1$,  define $$\varphi_2(\lambda):=(\lambda^e_1-1, \lambda^o_1, \ldots, \lambda^o_{s-1}, \lambda^o_s+1),$$ 
 the partition obtained from $\lambda$ by subtracting one from the even part of $\lambda$ and adding one to the last part of $\lambda$. Then, $$\mathcal B_3(n)=\{\mu\in \mathcal P^{\ou}_{\ed}(n)\ \big| \  \ell_e(\mu)=1, \ell_o(\mu) \geq 2,\, \mu^o_1\geq\mu^o_2>\mu^o_3>\ldots >\mu^o_s>\mu^e_1  \}.$$
 
\smallskip

\noindent \underline{Case 4:} ($n$ even)
  If $\ell_e(\lambda) \geq 2$ and $\lambda^o = (\lambda^e_s - 1, 1)$,   define 
  $$\varphi_2(\lambda):=(\lambda^e_1+1, \lambda^e_2-1, \ldots, \lambda^e_s-1, \lambda_1^o=\lambda^e_s-1)\cup (1^{\ell_e(\lambda)-1}),$$  the partition obtained from $\lambda$ by removing a part equal to $1$, adding one to the largest part of $\lambda$,  subtracting one from every even part  of $\lambda$ other than $\lambda^e_1$ and inserting $\ell_e(\lambda)-1$ parts equal to $1$.  Then, 
$$\mathcal B_4(n)=\left\{\mu\in \mathcal P^{\ou}_{\ed}(n)\left|\begin{array}{l}\mu=\mu^o, \, \mu\not\in \mathcal D, \, \ell(\mu)\geq 4,\, \ell(\mu) \text{ even}, \\ \ \\ \mu_1\neq \mu_2,m_\mu(1)=\ell_{>1}(\mu)-2, \, m_\mu(a_{>1})>1\end{array} \right.\right\}.$$

  \smallskip

  \smallskip
  
\noindent \underline{Case 5:} If $\ell_e(\lambda)=\ell_o(\lambda)=1$, or $\ell_e(\lambda)\geq 2$ and $\lambda^o \neq (\lambda^e_s - 1, 1)$, 
let $$k:=\begin{cases}\min\{\lambda_1^o ,  \lambda_s^e - \lambda_1^o\} & \text{ if } \lambda^o \neq \emptyset,\\ 1 & \text{ if } \lambda^o = \emptyset.\end{cases}$$  and define $$\varphi_2(\lambda):=(\lambda^e_1-k, \lambda^e_2-k, \ldots , \lambda^e_s-k)\cup \lambda^o\cup (k^{\ell_e(\lambda)}),$$  the partition obtained from $\lambda$ by subtracting $k$ from each part of $\lambda^e$ and inserting $\ell_e(\lambda)$ parts equal to $k$. 

Then, $\mathcal B_5(n)$ is the subset of partitions
$\mu\in \mathcal P^{\ou}_{\ed}(n)$ with $\mu=\mu^o$, $\mu\not\in \mathcal D$,  $\ell(\mu)\geq 3$, and if $\ell(\mu)>3$ then
\begin{itemize}
    \item[(I)] If $\mu_r=1$ and $m_\mu(1)<\ell_{>1}(\mu)$, then $\mu_{rr}$  exists and $m_\mu(1)\in \{\ell_{\geq \mu_{rr}}(\mu)-1, \, \ell_{\geq \mu_{rr}}(\mu)\}$.  
\item[(II)] If $\mu_r>1$, then $\ell(\mu)\geq 5$ and
\begin{itemize}
    \item[(i)] if  $m_\mu(\mu_r)<\ell_{>\mu_r}(\mu)$, then $\mu_{rr}$  exists  and $m_\mu(\mu_r)\in \{\ell_{\geq \mu_{rr}}(\mu)-1, \, \ell_{\geq \mu_{rr}}(\mu)\};$
    \item[(ii)] if $m_\mu(\mu_r)=\ell_{>\mu_r}(\mu)$, then $m_\mu(a_{>\mu_r})>1;$ 
    \item[(iii)] if $m_\mu(\mu_r)>\ell_{>\mu_r}(\mu)$, then $\ell_{\geq \mu_r}(\mu$) is odd. 
    \end{itemize}
\end{itemize}
{\bf Note:} If $\mu_r=1$ and $m_\mu(1)\geq \ell_{>1}(\mu)$, there are no additional conditions.

Examples 6. -- 15. below illustrate different partitions that belong to Case 5 and the discussion before the various examples clarifies  the description of $\mathcal B_5(n)$.

The sets $\mathcal B_i(n)$, $1\leq i\leq 5$, are mutually disjoint. {To see that $\mathcal B_4(n)\cap \mathcal B_5(n)=\emptyset$, let $\mu\in \mathcal B_5(n)$ and consider the following cases: 
\begin{itemize}
    \item if  $\mu_r>1$, then $\ell(\mu)\geq 5$ and $m_\mu(1)=1$ and thus $m_\mu(1)\neq \ell_{>1}(\mu)-2$;
    \item if  $\mu_r=1$ and   $m_\mu(1)= \ell_{>1}(\mu)-2$, then the condition $m_\mu(1)\in \{\ell_{\geq \mu_{rr}}(\mu)-1, \, \ell_{\geq \mu_{rr}}(\mu)\}$ implies that  $m_\mu(a_{>1})=1$. 
\end{itemize}  

\noindent {\bf Note:} In fact, if $\mu\in \mathcal B_5(n)$ with $\mu_r=1$ and   $m_\mu(1)\leq \ell_{>1}(\mu)-2$,  then   $m_\mu(a_{>1})=1$.}\smallskip

The mapping  $$\varphi_2:\mathcal P_{\od}^{\eu}(n) \to \displaystyle \bigcup_{i=1}^5\mathcal B_i(n)$$ is a bijection. To see that $\varphi_2$ is invertible, let $\mu\in \ds\bigcup_{i=1}^5\mathcal B_i(n)$. \smallskip

If $\ds \mu\in \mathcal B_1(n)\cup \mathcal B_2(n)$, then $$\varphi_2^{-1}(\mu)=\mu.$$

If $\mu\in \mathcal B_3(n)$, then $\mu$ has one even part and at least two odd parts and only the first two parts may be equal. We have $$\varphi_2^{-1}(\mu)=(\mu_1+1, \mu_2, \ldots, \mu_{\ell(\mu)-1}, \mu_{\ell(\mu)}-1).$$ 

If $n$ is even and $\mu\in \mathcal B_4(n)$, then all parts of $\mu$ are odd and $\mu_1\neq \mu_2$.  If we set $m=m_\mu(1)$, we have $\ell(\mu)=2m+2$ and $\mu_{m+1}=\mu_{m+2}>1$. Then,    $$\varphi_2^{-1}(\mu)=(\mu_1-1, \mu_2+1, \ldots, \mu_{m+1}+1, \mu_{m+2}, 1).$$

If $\mu\in \mathcal B_5(n)$, then we have the following cases. \smallskip 

If $\ell(\mu)=3$, we have $\varphi_2^{-1}(\mu)=(\mu_1+\mu_3, \mu_2)$. \smallskip

If $\ell(\mu)>3$, then

\begin{itemize}
\item if $m_\mu(\mu_r)<\ell_{>\mu_r}(\mu)$, or $m_\mu(\mu_r)=\ell_{>\mu_r}(\mu)$ and $\mu_r\neq 1$, $\varphi_2^{-1}(\mu)$ is obtained from $\mu$ by adding $\mu_r$ to each of the first $\ell_{\geq\mu_{rr}(\mu)}-1$ parts and removing  $\ell_{\geq\mu_{rr}(\mu)}-1$ parts equal to $\mu_r$. \vspace*{.1in}

\item if $m_\mu(\mu_r)> \ell_{>\mu_r}(\mu)$, or $m_\mu(\mu_r)=\ell_{>\mu_r}(\mu)$ and $\mu_r= 1$, $\varphi_2^{-1}(\mu)$ is obtained from $\mu$ by adding $\mu_r$ to each of the first $\lfloor\ell_{\geq \mu_r}(\mu)/2\rfloor$ parts of $\mu$ and removing $\lfloor\ell_{\geq \mu_r}(\mu)/2\rfloor$ parts equal to $\mu_r$. 
\end{itemize}\smallskip

One can verify directly that $p^{\eu}_{\od}(4)=p^{\ou}_{\ed}(4)=3$. If $n\geq 5$, the partition $(n-2,2)\in \mathcal P^{\ou}_{\ed} $ is not in $\bigcup_{i=1}^5 \mathcal B_i(n)$. Together with the combinatorial injection $\varphi_2$, this  proves the following theorem.

\begin{theorem} The inequality $p^{\eu}_{\od}(n)\leq p^{\ou}_{\ed}(n)$ holds for all $n\geq 0$.  Moreover,  $p^{\eu}_{\od}(n)< p^{\ou}_{\ed}(n)$ for $n\geq 5$.
    \end{theorem}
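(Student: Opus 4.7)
The plan is to exploit the explicit piecewise map $\varphi_2:\mathcal{P}^{\eu}_{\od}(n)\to\mathcal{P}^{\ou}_{\ed}(n)$ constructed above. Its injectivity will yield the weak inequality $p^{\eu}_{\od}(n)\leq p^{\ou}_{\ed}(n)$ for every $n\geq 0$, and strict inequality for $n\geq 5$ will follow from exhibiting a partition in the codomain that is not in the image. The first step is to verify, case by case, that $\varphi_2(\lambda)$ lies in the claimed image subset $\mathcal{B}_i(n)\subseteq \mathcal{P}^{\ou}_{\ed}(n)$. Cases 1 and 2 are immediate since $\varphi_2$ is the identity there. In Case 3 the new even part $\lambda^o_s+1$ is strictly less than $\lambda^o_{s-1}$, by distinctness of the odd parts of $\lambda$, and the new largest part $\lambda^e_1-1$ is odd; thus $\varphi_2(\lambda)\in\mathcal{P}^{\ou}_{\ed}(n)$ and has the structure described by $\mathcal{B}_3(n)$. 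Cases 4 and 5 require routine bookkeeping to check that the sum is preserved, that every part of $\varphi_2(\lambda)$ is odd, and that the prescribed multiplicity patterns are forced by the construction.

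Next I would verify that the sets $\mathcal{B}_1(n),\ldots,\mathcal{B}_5(n)$ are pairwise disjoint and that the formulas given for $\varphi_2^{-1}$ on each piece are genuine inverses. The separation of $\mathcal{B}_1,\mathcal{B}_2,\mathcal{B}_3$ from each other and from $\mathcal{B}_4\cup \mathcal{B}_5$ follows from coarse structural features (all parts distinct, the single partition $(n)$, a unique even part, versus all-odd with repetition), and $\mathcal{B}_4\cap \mathcal{B}_5=\emptyset$ is handled in the note supplied above. The main obstacle is verifying invertibility on $\mathcal{B}_5(n)$: the forward map in Case 5 depends on the quantity $k=\min\{\lambda^o_1,\lambda^e_s-\lambda^o_1\}$, and the inverse must recover $k$, and identify which cluster of equal odd parts in $\mu$ came from the inserted block $(k^{\ell_e(\lambda)})$, from the multiplicity data of $\mu$ alone. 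The subcases (I), (II.i), (II.ii), (II.iii) in the description of $\mathcal{B}_5(n)$ correspond precisely to whether $k=\lambda^o_1$ or $k=\lambda^e_s-\lambda^o_1$ and to a parity condition on $\ell_{\geq \mu_r}(\mu)$; for each subcase one applies the appropriate inverse rule and confirms that the resulting $\lambda$ satisfies $\lambda^o\neq(\lambda^e_s-1,1)$, which is exactly the condition that forces Case 5 rather than Case 4 under $\varphi_2$.

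Finally, to obtain strict inequality for $n\geq 5$ I would use the witness $(n-2,2)\in\mathcal{P}^{\ou}_{\ed}(n)$: it contains the even part $2$ so it is not in $\mathcal{B}_1\cup \mathcal{B}_2$; it has at most one odd part, so it is not in $\mathcal{B}_3$ (which requires $\ell_o\geq 2$); and it contains an even part, so it is not in $\mathcal{B}_4\cup \mathcal{B}_5$, both of which consist of partitions with all parts odd. This completes the proof.
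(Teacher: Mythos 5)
Your proposal is correct and takes essentially the same route as the paper: the paper likewise obtains $p^{\eu}_{\od}(n)\leq p^{\ou}_{\ed}(n)$ from the bijection $\varphi_2:\mathcal P_{\od}^{\eu}(n)\to\bigcup_{i=1}^5\mathcal B_i(n)$ and gets strictness for $n\geq 5$ from the very same witness $(n-2,2)\in\mathcal P^{\ou}_{\ed}(n)$, which lies outside every $\mathcal B_i(n)$ (after checking equality at $n=4$). The verifications you defer as routine (well-definedness in Cases 4--5 and invertibility on $\mathcal B_5(n)$) are exactly the ones the paper settles in its preceding description of the sets $\mathcal B_i(n)$ and the inverse formulas, so your outline matches the paper's proof.
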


\subsubsection{Examples}

To help clarify the map $\varphi_2$ and its image, we provide examples for all the cases. We also discuss Case 5 in more detail.

\begin{example}\label{B1-4}
Let $n = 16$. We give examples for partitions in $\mathcal P_{\od}^{\eu}(16)$ in each of the Cases 1 -- 4. 
\begin{itemize}
\item[\underline{Case 1:}] If $\lambda = (15,1)$, then $\varphi_2(\lambda) = (15, 1) \in \mathcal{B}_1(16).$\smallskip 

\item[\underline{Case 2:}] If $\lambda = (16),$ then $ \varphi_2(\lambda) = (16) \in \mathcal{B}_2(16).$\smallskip

\item[\underline{Case 3:}] If $ \lambda = (8, 5, 3)$, then $\varphi_2(\lambda) = (7, 5, 4) \in \mathcal{B}_3(16).$\smallskip

\item[\underline{Case 4:}] If $\lambda = (8, 4, 3, 1)$, then $\varphi_2(\lambda) = (9, 3, 3, 1) \in \mathcal{B}_4(16)$.
\end{itemize}
\end{example}

First, we consider partitions $\lambda$ with $\ell_e(\lambda)=\ell_o(\lambda)=1$. If $\lambda = (\lambda^e_1, \lambda^o_1),$  then $\varphi_2(\lambda) = (\lambda^e_1 - k, \lambda^o_1) \cup (k) \in \mathcal B_5(n)$, as there are exactly three parts, they are all odd, and either $\lambda^e_1 - k =  \lambda^o_1$ or $k =  \lambda^o_1$ (or both  if $\lambda^e_1=2\lambda^o_1$).  This leads to  all partitions $\mu\in \mathcal B_5(n)$ with  $\ell(\mu) = 3$. Below, we consider one such example. 

\begin{example}\label{B5a}
Let $n = 13$ and $\lambda = (8, 5)$. Then $k=3$ and $\varphi_2(\lambda) = (5, 5, 3) \in \mathcal{B}_5(13)$.
\end{example}

Next, we restrict our attention to partitions with $k = 1.$ We start with partitions $\lambda$ with  $\lambda = \lambda^e = (\lambda^e_1, \lambda^e_2, \dotsc, \lambda^e_s)$ and $\ell(\lambda) > 1$.  Then $k=1$ and  $\mu=\varphi_2(\lambda) = (\lambda^e_1 - 1, \lambda^e_2 -1, \dotsc, \lambda^e_s - 1) \cup (1^{\ell_e(\lambda)}) \in \mathcal B_5(n)$, as $\mu_r = 1$ and $m_\mu(1)\geq \ell_{> 1}(\mu)$. Note that in this case  $\ell(\mu)$ is even and $m_\mu(1) = \ell_{> 1}(\lambda) + j$ with $j \geq 0$ even. Below, we give a specific example for a partition with all  parts  even and length greater than one. 

\begin{example}\label{B5b}
Let $n = 16$ and $\lambda = (8, 6, 2)$. Then $\varphi_2(\lambda) = (7, 5, 1^4) \in \mathcal{B}_5(16)$.
\end{example}

Still restricting to partitions with $k = 1$, it remains to consider partitions $\lambda$ in Case 5 satisfying $\ell_e(\lambda) \geq 2, \ell_o(\lambda) \geq 1$, and $\lambda^o \neq (\lambda^e_s - 1, 1)$ for which $k = 1$. We have two subcases. 

First,  if $\lambda = \lambda^e \cup (1)$, then  $k=1$ and 
$$\mu=\varphi_2(\lambda) =(\lambda^e_1 - 1, \lambda^e_2 - 1, \dotsc, \lambda_s^e - 1) \cup (1^{\ell_e(\lambda) + 1}) \in \mathcal B_5(n),$$
 as $\mu_r = 1$ and $m_\mu(1)> \ell_{> 1}(\mu)$. Note that in this case $\ell(\mu)$ is odd, and $m_\mu(1) = \ell_{ > 1}(\mu) + j$ with $j \geq 1$ odd. Below, we give a specific example. 

\begin{example}\label{B5c}
Let $n = 27$ and $\lambda = (8, 6, 6, 4, 2, 1)$. Then $\varphi_2(\lambda) = (7, 5, 5, 3, 1^7) \in \mathcal{B}_5(27)$.
\end{example}

Second,  if $\lambda_1^o = \lambda_s^e - 1,$ $\lambda^o \neq (1)$, and $\lambda^o\neq (\lambda_s^e - 1, 1)$, then $k=1$ and 
\begin{align*}
& \mu=\varphi_2(\lambda) =(\lambda^e_1 - 1, \lambda^e_2 - 1, \dotsc, \lambda_s^e - 1) \cup (\lambda_s^e - 1, \lambda^o_2, \dotsc, \lambda^o_s) \cup (1^{\ell_e(\lambda)}) \in \mathcal B_5(n),
\end{align*}

\noindent as $\mu_r = 1$, $m_\mu(1) < \ell_{>1}(\mu),$ $\lambda_s^e - 1$ is repeated, and if $\lambda^o_s>1$, then $m_\mu(1)=\ell_{\geq \mu_{rr}}(\mu)-1$ and if $\lambda^o_s=1$, then $m_\mu(1)=\ell_{\geq \mu_{rr}}(\mu)$.This falls under restriction (I).
Below, we give a specific example.

\begin{example}\label{B5d}
Let $n = 29$ and $\lambda = (8, 6, 6, 5, 3, 1)$. Then $\varphi_2(\lambda) = (7, 5, 5, 5, 3, 1^4) \in \mathcal{B}_5(29)$.
\end{example}

Next, we consider partitions $\lambda$ in Case 5 satisfying $\ell_e(\lambda) \geq 2, \ell_o(\lambda) \geq 1$, and $\lambda^o \neq (\lambda^e_s - 1, 1)$ for which  $k > 1$. The images of these partitions under $\varphi_2$ all fall under restriction (II).

\smallskip

\noindent $\bullet$ First, assume  $3\leq k=\lambda^e_s - \lambda_1^o < \lambda^o_1$. In addition, assume $\lambda_2^o = k$. Then 
\begin{align*}
\mu=\varphi_2(\lambda) &= (\lambda^e_1 - k, \lambda^e_2 - k, \dotsc, \lambda_s^e - k = \lambda_1^o) \cup (\lambda_1^o) \cup (k^{\ell_e(\lambda) + 1}) \cup (\lambda_3^o, \lambda^o_4, \dotsc, \lambda^o_s).
 \end{align*}
We have $\mu\in \mathcal B_5(n)$, as $m_\mu(\mu_r) = \ell_{>\mu_r}(\mu)$ and $m_\mu(a_{> \mu_r}) > 1$. We obtain partitions in $\mathcal B_5(n)$ under restriction (II)(ii). Below, we give a specific example which we illustrate with Ferrers diagrams.  

\begin{example}\label{B5e}
Let $n = 35$ and $\lambda = (10, 8, 8, 5, 3, 1)$. Then $k=3$ and 
$$\varphi_2(\lambda) = (7, 5, 5, 5, 3, 3, 3, 3, 1) \in \mathcal{B}_5(35).$$\smallskip

\begin{center}
 \ytableausetup{smalltableaux}
$\lambda =$ \ydiagram [*(white) \bullet]
  {5 + 3,  5+ 3, 5 + 3, 5 + 0, 3 + 0, 1 + 0}
  *[*(white)]{10, 8, 8, 5, 3, 1}\ $\mapsto \varphi_2(\lambda)=$
\ydiagram [*(white) \bullet]
  {7 + 0,  5+ 0, 5 + 0, 5 + 0, 0 + 3, 0 + 3, 0 + 3, 3 + 0, 1 + 0}
  *[*(white)]{7, 5, 5, 5, 3, 3, 3, 3, 1}
\end{center} 
\end{example}
\smallskip

\noindent $\bullet$ If  $3\leq k=\lambda^e_s - \lambda_1^o < \lambda^o_1$ with the additional assumption $\lambda_1^o > k > \lambda_2^o$, then 
\begin{align*}
\mu=\varphi_2(\lambda) &= (\lambda^e_1 - k, \lambda^e_2 - k, \dotsc, \lambda_s^e - k = \lambda_1^o) \cup (\lambda_1^o) \cup (k^{\ell_e(\lambda)}) \cup (\lambda_2^o, \lambda^o_3, \dotsc, \lambda^o_s).
 \end{align*}
We have $\mu\in \mathcal B_5(n)$, as $m_\mu(\mu_r) = \ell_{>\mu_r}(\mu) -1$, $m_\mu(a_{> \mu_r}) > 1,$ so $\mu_{rr} = a_{> \mu_r}$ and $m_\mu(\mu_r) = \ell_{\geq\mu_{rr}}(\mu) - 1$. 
We obtain partitions in $\mathcal B_5(n)$ falling under restriction (II)(i). Below, we give a specific example which we illustrate with Ferrers diagrams.  

\begin{example}\label{B5f}
Let $n = 32$ and $\lambda = (10, 8, 8, 5, 1)$. Then $k=3$ and 
$$\varphi_2(\lambda) = (7, 5, 5, 5, 3, 3, 3, 1) \in \mathcal{B}_5(32).$$
\smallskip

\begin{center}
 \ytableausetup{smalltableaux}
$\lambda =$ \ydiagram [*(white) \bullet]
  {5 + 3,  5+ 3, 5 + 3, 5 + 0, 1 + 0}
  *[*(white)]{10, 8, 8, 5, 1}\ $\mapsto  \varphi_2(\lambda)=$
\ydiagram [*(white) \bullet]
  {7 + 0,  5+ 0, 5 + 0, 5 + 0, 0 + 3, 0 + 3, 0 + 3, 1 + 0}
  *[*(white)]{7, 5, 5, 5, 3, 3, 3, 1}
\end{center} 
\smallskip

\end{example}

\noindent $\bullet$ If  $3\leq k=\lambda^e_s - \lambda_1^o < \lambda^o_1$ with the additional assumption  $\lambda_i^o = k$ for some $i > 2$, then 
\begin{align*}
\mu=\varphi_2(\lambda) =& (\lambda^e_1 - k, \lambda^e_2 - k, \dotsc, \lambda_s^e - k = \lambda_1^o) \cup (\lambda_1^o, \lambda_2^o, \dotsc, \lambda_{i-1}^o) \\
& \cup (k^{\ell_e(\lambda) + 1}) \cup (\lambda_{i+1}^o, \lambda^o_{i+2}, \dotsc, \lambda^o_s).
 \end{align*} 
We have $\mu\in \mathcal B_5(n)$, as $m_\mu(\mu_r) <\ell_{>\mu_r}(\mu)$, $\mu_{rr}$ exists but is not $a_{> \mu_r}$, and $m_\mu(\mu_r) = \ell_{\geq\mu_{rr}}(\mu)$. We obtain partitions in $\mathcal B_5(n)$ falling under condition (II)(i). Below, we give a specific example which we illustrate with Ferrers diagrams. 

\begin{example}\label{B5g}
Let $n = 66$ and $\lambda = (16, 14, 10, 10, 7, 5, 3, 1)$. Then $k=3$ and 
$$\varphi_2(\lambda) = (13, 11, 7, 7, 7, 5, 3, 3, 3, 3, 3, 1) \in \mathcal{B}_5(66).$$\smallskip

\begin{center}
 \ytableausetup{smalltableaux}
$\lambda =$ \ydiagram [*(white) \bullet]
  {7+ 3, 7+ 3,  7+ 3, 7+ 3, 7 + 0, 5 + 0, 3 + 0, 1 + 0}
  *[*(white)]{16, 14, 10, 10, 7, 5, 3, 1}\ $\mapsto   \varphi_2(\lambda)=$
\ydiagram [*(white) \bullet]
  {13 + 0,  11+ 0, 7 + 0, 7 + 0, 7 + 0, 5 + 0, 0 + 3, 0 + 3, 0 + 3, 0 + 3, 3 + 0, 1 + 0}
  *[*(white)]{13, 11, 7, 7, 7, 5, 3, 3, 3, 3, 3, 1}
\end{center} 

\end{example}\smallskip

\noindent $\bullet$ If  $3\leq k=\lambda^e_s - \lambda_1^o < \lambda^o_1$ with the additional assumption $\lambda_i^o > k > \lambda_{i+1}^o $ for some $i \geq 2$ (recall that if $j>\ell(\lambda)$, we set $\lambda_j=0$ for convenience), then 
\begin{align*}
\varphi_2(\lambda) =& (\lambda^e_1 - k, \lambda^e_2 - k, \dotsc, \lambda_s^e - k = \lambda_1^o) \cup (\lambda_1^o, \lambda_2^o, \dotsc, \lambda_{i}^o) \\
& \cup (k^{\ell(\lambda^e)}) \cup (\lambda_{i+1}^o, \lambda^o_{i+2}, \dotsc, \lambda^o_s).
 \end{align*}
We have $\mu\in \mathcal B_5(n)$, as $m_\mu(\mu_r) <\ell_{>\mu_r}(\mu)$, $\mu_{rr}$ exists but is not $a_{> \mu_r}$, and $m_\mu(\mu_r) = \ell_{\geq\mu_{rr}}(\mu)-1$. We obtain partitions in $\mathcal B_5(n)$ falling under restriction (IIi) \ccom{(II)(i)}. Below, we give a specific example which we illustrate with Ferrers diagrams. \smallskip

\begin{example}\label{B5h}
Let $n = 63$ and $\lambda = (16, 14, 10, 10, 7, 5, 1)$. Then $k=3$ and  
$$\varphi_2(\lambda) = (13, 11, 7, 7, 7, 5, 3, 3, 3, 3, 1) \in \mathcal{B}_5(63).$$\smallskip

\begin{center}
 \ytableausetup{smalltableaux}
$\lambda =$ \ydiagram [*(white) \bullet]
  {7+ 3, 7+ 3,  7+ 3, 7+ 3, 7 + 0, 5 + 0, 1 + 0}
  *[*(white)]{16, 14, 10, 10, 7, 5, 1}\ $\mapsto \varphi_2(\lambda)=$
\ydiagram [*(white) \bullet]
  {13 + 0,  11+ 0, 7 + 0, 7 + 0, 7 + 0, 5 + 0, 0 + 3, 0 + 3, 0 + 3, 0 + 3, 1 + 0}
  *[*(white)]{13, 11, 7, 7, 7, 5, 3, 3, 3, 3, 1}
\end{center} 

\end{example}\smallskip

Lastly, we consider  partitions $\lambda$ in Case 5, satisfying $\ell_e(\lambda) \geq 2, \ell_o(\lambda) \geq 1$, and $\lambda^o \neq (\lambda^e_s - 1, 1),$ and such that $\lambda^e_s - \lambda_1^o \geq \lambda^o_1=k\geq 3$. 
\smallskip

\noindent $\bullet$ If $\lambda^e_s - \lambda_1^o > \lambda^o_1$, then 
\begin{align*}
\mu=\varphi_2(\lambda) = &(\lambda^e_1 - k, \lambda^e_2 - k, \dotsc, \lambda_s^e - k \neq \lambda_1^o) \cup (\lambda_1^o = k) \\
& \cup (k^{\ell(\lambda^e) }) \cup (\lambda_2^o, \lambda^o_3, \dotsc, \lambda^o_s).
 \end{align*}
We have $\mu\in \mathcal B_5(n)$, as $m_\mu(\mu_r) = \ell_{>\mu_r}(\mu) + 1$ and $\ell_{ \geq \mu_r}(\mu)$ is odd. We obtain partitions in $\mathcal B_5(n)$ falling under restriction (II)(iii). Below, we give a specific example which we illustrate with Ferrers diagrams. \smallskip 

\begin{example}\label{B5i}
Let $n = 54$ and $\lambda = (16, 14, 10, 10, 3, 1)$. Then $k = 3$ and 
$$\varphi_2(\lambda) = (13, 11, 7, 7, 3, 3, 3, 3, 3, 1) \in \mathcal{B}_5(63).$$\smallskip

\begin{center}
 \ytableausetup{smalltableaux}
$\lambda =$ \ydiagram [*(white) \bullet]
  {0 + 3, 0+ 3,  0+ 3, 0+ 3, 3 + 0, 1 + 0}
  *[*(white)]{16, 14, 10, 10, 3, 1}\ $\mapsto \varphi_2(\lambda)=$
\ydiagram [*(white) \bullet]
  {13 + 0,  11+ 0, 7 + 0, 7 + 0, 0 + 3, 0 + 3, 0 + 3, 0 + 3, 3 + 0, 1 + 0}
  *[*(white)]{13, 11, 7, 7, 3, 3, 3, 3, 3, 1}
\end{center}
\end{example}

\noindent $\bullet$ If $\lambda^e_s - \lambda_1^o = \lambda^o_1$, then 

\begin{align*}
\mu=\varphi_2(\lambda) =& (\lambda^e_1 - k, \lambda^e_2 - k, \dotsc, \lambda_s^e - k = \lambda_1^o) \cup (\lambda_1^o = k) \\
& \cup (k^{\ell(\lambda^e) }) \cup (\lambda_2^o, \lambda^o_3, \dotsc, \lambda^o_s).
 \end{align*}
We have $\mu\in \mathcal B_5(n)$, as $m_\mu(\mu_r) = \ell_{\geq \mu_r}(\mu) + j$ with $j > 1$ odd and $\ell_{\geq \mu_r}(\mu)$ is odd. We obtain partitions in $\mathcal B_5(n)$ falling under restriction (II)(iii). Below, we give a specific example which we illustrate with Ferrers diagrams. \smallskip

\begin{example}\label{B5h1}
Let $n = 59$ and $\lambda = (16, 14, 10, 10, 5, 3, 1)$. Then $k = 5$ and

$$\varphi_2(\lambda) = (11, 9, 5, 5, 5, 5, 5, 5, 5, 3, 1) \in \mathcal{B}_5(59).$$\\

\begin{center}
 \ytableausetup{smalltableaux}
$\lambda =$ \ydiagram [*(white) \bullet]
  {0+ 5, 0 + 5,  0+ 5, 0+ 5, 5+ 0, 3 + 0, 1 + 0}
  *[*(white)]{16, 14, 10, 10, 5, 3, 1}\ $\mapsto \varphi_2(\lambda)=$
\ydiagram [*(white) \bullet]
  {11 + 0,  9+ 0, 5 + 0, 5 + 0, 0 + 5, 0 + 5, 0 + 5, 0 + 5, 5 + 0, 3 + 0, 1 + 0}
  *[*(white)]{11, 9, 5, 5, 5, 5, 5, 5, 5, 3, 1}
\end{center}
\end{example}

\smallskip

\subsection{The inequality $p_{\eu}^{\od}(n) <p^{\ed}_{\ou}(n)$}

 Let $n\geq 0$. We define a bijection $$\varphi_3: \mathcal P_{\eu}^{\od}(n)\to \mathcal P_{\ou}^{\ed}(n).$$

 Let $\lambda\in \mathcal P_{\eu}^{\od}(n)$. We consider three cases.

\noindent \underline{Case 1:} If $\lambda^e=\emptyset$, define $\varphi_3(\lambda):=\lambda$. \smallskip
 
 \noindent \underline{Case 2:} If $\lambda^o=\emptyset$ and  $\lambda\in \mathcal D(n)$, define $\varphi_3(\lambda):=\lambda$. \smallskip
 
 \noindent \underline{Case 3:} 
If $\lambda^o=\emptyset$ and  $\lambda\not \in \mathcal D(n)$ or $\lambda^o, \lambda^e\neq \emptyset$, define $$\varphi_3(\lambda):=(\lambda_1-1,  \lambda_2-1, \ldots, \lambda_{\ell(\lambda)}-1)\cup (1^{\ell(\lambda)}),$$
  the partition obtained from $\lambda$ by subtracting one from each of its  parts  and inserting $\ell(\lambda)$ parts equal to $1$. 

Let $\widetilde{\mathcal P}_{\ou}^{\ed}(n)$ be the subset of partitions $\mu \in \mathcal P_{\ou}^{\ed}(n)$ satisfying one of the following conditions.
\begin{itemize}
    \item[(i)] all parts of $\mu$ are odd and distinct; 
    \item[(ii)] all parts of $\mu$ are even and distinct; 
    \item[(iii)] $\mu^o\neq \emptyset$, $\ell(\mu)\geq 4$, $\ell(\mu)$ even, $m_\mu(1)\geq 2$, $m_\mu(1)\geq \ell_{>1}(\mu)$, and 
 if $\mu^e=\emptyset$ and $1$ is the only repeated part, then either $m_\mu(1)=\ell(\mu)$ or $m_\mu(1)\geq \ell_{>1}(\mu)+4$.
\end{itemize}
Then, $\varphi_3: \mathcal P_{\eu}^{\od}(n)\to \widetilde{\mathcal P}_{\ou}^{\ed}(n)$ is a bijection. To see that $\varphi_3$ is invertible, let $\mu\in \widetilde{\mathcal P}_{\ou}^{\ed}(n)$. If $\mu=\mu^o$
 or $\mu=\mu^e\in \mathcal D(n)$, then $\varphi_3^{-1}(\mu)=\mu$. Otherwise, $\mu$ satisfies all conditions in (iii) above and $\varphi_3^{-1}(\mu)$ is the partition obtained from $\mu$ by removing $\ell(\mu)/2$ parts equal to $1$
and adding one to each of the first $\ell(\mu)/2$ parts of $\mu$.

\begin{example}\label{phi3}
Let $n = 22$ and $\lambda = (10, 8, 2, 2)$. Then 

$$\varphi_3(\lambda) = (9, 7, 1^6) \in \widetilde{\mathcal P}_{\ou}^{\ed}(22).$$\smallskip

\begin{center}
 \ytableausetup{smalltableaux}
$\lambda =$ \ydiagram [*(white) \bullet]
  {9+1, 7 + 1,  1+ 1, 1 + 1}
  *[*(white)]{10, 8, 2, 2}\ $\mapsto \varphi_3(\lambda)=$
\ydiagram [*(white) \bullet]
  {9 + 0, 7 + 0, 1 + 0, 1 + 0, 1, 1, 1, 1}
  *[*(white)]{9, 7, 1, 1, 1, 1}
\end{center}\smallskip

To reverse, start with $\mu = (9, 7, 1^6)$. Then $\ell(\mu) = 8,$ so remove $4$ parts of size $1$ and add one to the first 4 parts. This returns $\varphi_3^{-1}(\mu) = (9 + 1, 7 + 1, 1 + 1, 1+ 1) = (10, 8, 2, 2) \in \mathcal P_{\eu}^{\od}(22).$
\end{example}

Clearly, $p_{\eu}^{\od}(1) =p^{\ed}_{\ou}(1)=1$. If $n=2$, the partition $(1,1)\in \mathcal P_{\ou}^{\ed}(2)$ is not in $\widetilde{\mathcal P}_{\ou}^{\ed}(2)$. If $n\geq 3$ is odd, then the partition $(n-1, 1)\in {\mathcal P}_{\ou}^{\ed}(n)$ is not in $\widetilde{\mathcal P}_{\ou}^{\ed}(n)$. If $n\geq 4$ is even, then the partition $(n-2,1, 1)\in {\mathcal P}_{\ou}^{\ed}(n)$ is not in $\widetilde{\mathcal P}_{\ou}^{\ed}(n)$.  Together with the combinatorial injection $\varphi_3$, this proves the following theorem. 
\begin{theorem}
    The inequality $p_{\eu}^{\od}(n) \leq p^{\ed}_{\ou}(n)$ holds for all $n\geq 0$. Moreover, $p_{\eu}^{\od}(n) <p^{\ed}_{\ou}(n)$ for $n\geq 2$.
\end{theorem}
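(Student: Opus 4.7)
The plan is to establish the two claims supporting the construction just outlined: that $\varphi_3$ is a bijection from $\mathcal P_{\eu}^{\od}(n)$ onto $\widetilde{\mathcal P}_{\ou}^{\ed}(n)$, and that $\mathcal P_{\ou}^{\ed}(n) \setminus \widetilde{\mathcal P}_{\ou}^{\ed}(n) \neq \emptyset$ for $n \geq 2$. First I would verify well-definedness of $\varphi_3$ case by case. Cases 1 and 2 are identity maps, and their images are partitions with all-distinct-odd or all-distinct-even parts, immediately satisfying conditions (i) and (ii). The substantive case is Case 3: for $\lambda \in \mathcal P_{\eu}^{\od}(n)$ with either all-even non-distinct parts or with both parities present, subtracting $1$ from every part swaps the parity of each part while preserving the ordering ``all even greater than all odd'', since the gap between the parities in $\lambda$ is already at least $1$; the old (distinct) odd parts produce new distinct even parts, and the old even parts produce new (possibly repeated) odd parts. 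Appending $\ell(\lambda)$ parts equal to $1$ keeps the inequality intact. A direct count then gives $\ell(\mu) = 2\ell(\lambda) \geq 4$ with $\ell(\mu)$ even, $m_\mu(1) \geq \ell(\lambda) \geq \ell_{>1}(\mu)$, and $m_\mu(1) \geq 2$, which handles the bulk of condition (iii).

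Next I would verify the additional clause of (iii) (the one governing $\mu^e = \emptyset$ and $1$ being the only repeated part) together with invertibility. Splitting Case 3 into the sub-case $\lambda^o = \emptyset$ (so $\lambda$ is all-even and not distinct) and the sub-case $\lambda^o, \lambda^e \neq \emptyset$ pinpoints precisely when $\mu$ has $\mu^e = \emptyset$ with $1$ as its only repeat, and a short multiplicity count in these situations forces either $m_\mu(1) = \ell(\mu)$ or $m_\mu(1) \geq \ell_{>1}(\mu) + 4$, matching the clause. For the inverse, the recipe ``remove $\ell(\mu)/2$ parts equal to $1$ and add $1$ to each of the first $\ell(\mu)/2$ parts'' is well-defined on $\widetilde{\mathcal P}_{\ou}^{\ed}(n)$ thanks to $m_\mu(1) \geq \ell_{>1}(\mu)$ (guaranteeing enough $1$s to remove), and the resulting partition belongs to $\mathcal P_{\eu}^{\od}(n)$ because incrementing the top $\ell(\mu)/2$ parts preserves the parity ordering and turns the distinct odd parts of $\mu$ at the top into distinct even parts of $\varphi_3^{-1}(\mu)$. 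Checking $\varphi_3 \circ \varphi_3^{-1} = \mathrm{id}$ and $\varphi_3^{-1} \circ \varphi_3 = \mathrm{id}$ in each of the three cases completes the bijection; I expect the parity tracking in the extra clause of (iii) to be the main obstacle.

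Finally, the strict inequality for $n \geq 2$ follows by checking that the three listed witnesses $(1,1)$, $(n-1,1)$, and $(n-2,1,1)$ each lie in $\mathcal P_{\ou}^{\ed}(n) \setminus \widetilde{\mathcal P}_{\ou}^{\ed}(n)$. Membership in $\mathcal P_{\ou}^{\ed}(n)$ is immediate from the parity ordering, and exclusion from $\widetilde{\mathcal P}_{\ou}^{\ed}(n)$ is transparent: each has length at most $3$ with mixed parities (or repeated $1$s with no large part), so none of conditions (i), (ii), or the ``$\ell(\mu) \geq 4$'' clause of (iii) can hold. Combined with the bijection $\varphi_3$, this yields $p_{\eu}^{\od}(n) < p^{\ed}_{\ou}(n)$ for $n \geq 2$.
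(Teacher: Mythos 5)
Your proposal follows the paper's argument exactly: it verifies case by case that $\varphi_3$ lands in the stated image set $\widetilde{\mathcal P}_{\ou}^{\ed}(n)$, inverts it by removing $\ell(\mu)/2$ parts equal to $1$ and incrementing the remaining parts, and uses the same witnesses $(1,1)$, $(n-1,1)$, $(n-2,1,1)$ to get strictness for $n\geq 2$. (One small slip: for $\mu\in\mathcal P_{\ou}^{\ed}(n)$ the top parts are the distinct \emph{even} ones, which the inverse turns into the distinct odd parts of $\varphi_3^{-1}(\mu)$, not the other way around; the argument is otherwise sound.)
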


\smallskip

\subsection{The inequality $p_{\od}^{\ed}(n) < p_{\eu}^{\od}(n)$}
One can verify directly that $p_{\od}^{\ed}(7)=4$ and $p_{\eu}^{\od}(7)=3$.

Let $n\geq 8$. We create an injection $$\varphi_4: \mathcal P_{\od}^{\ed}(n)\to \mathcal P_{\eu}^{\od}(n).$$

Let $\lambda\in \mathcal P_{\od}^{\ed}(n)$. We consider five cases.  For $1\leq i\leq 5$, we denote by $\mathcal C_i(n)$ the image  of the set of all partitions in Case $i$ under $\varphi_4$.  At the end of this section, we give examples for each case.\smallskip

\noindent \underline{Case 1:} If $\lambda^e=\emptyset$ or $\lambda^o=\emptyset$, define $\varphi_4(\lambda):=\lambda$.  Then, $$\mathcal C_1(n)=\{\mu\in \mathcal P^{\od}_{\eu}(n)\ \big| \ \mu=\mu^o \in \mathcal D(n) \text{ or } \mu=\mu^e \in \mathcal D(n)\}.$$
\smallskip 

\noindent \underline{Case 2:} If $\lambda^o, \lambda^e\neq \emptyset$ and $1\not \in \lambda$, define $\varphi_4(\lambda)$ to be the partition  obtained from $\lambda$ by subtracting one from each of the last $\ell_m(\lambda)$ parts and adding one to each of the first $\ell_m(\lambda)$ parts. Note that in this case, $\varphi_4$ is similar to the mapping $\varphi_1$ defined in section \ref{s1}. Then, 
$$\mathcal C_2(n)=\{\mu\in \mathcal P^{\od}_{\eu}(n)\ \big| \  \mu\in \mathcal D(n), \ \mu^o, \mu^e\neq \emptyset, \ \mu^o_s-\mu^e_1\geq 3\}.$$
\smallskip

\noindent \underline{Case 3:} Suppose $\lambda^o, \lambda^e\neq \emptyset$, $1 \in \lambda$, $\ell_o(\lambda)\geq 2$, and if $\ell_o(\lambda)=2$,  then $\lambda^e_s-\lambda^o_1\neq 1$. Let $\eta$ be the partition obtained from $\lambda$ by removing the first part and the last two parts. Then $\eta$ satisfies the conditions of Case 1 or Case 2 and $\varphi_4(\eta)$ is defined. If $\lambda_{\ell(\lambda)-1}=2k+1$ for some $k\geq 1$, then $\lambda_1\geq 2k+2$ and we write  $\lambda_1=q(2k+2)+r$ with $q\geq 1$, $r$ even and $0\leq r<2k+2$. We define  $$\varphi_4(\lambda):=\begin{cases}\varphi_4(\eta)\cup (2k+2)^{q+1}, & \text{ if } r=0, \\ \varphi_4(\eta)\cup (2k+2)^{q+1}\cup(r), & \text{ if } r>0. \end{cases}$$ Then, $\mathcal C_3(n)$ is the subset of partitions $\mu\in \mathcal P^{\od}_{\eu}(n)$ satisfying all of the following conditions:
\begin{itemize}
\item $\ell(\mu)\geq 3$;
\item $\mu$ has a single repeated part and the repeated part is even, of size at least $4$, and it is either the smallest or second smallest part of $\mu$;
    \item the partition $\widetilde \mu$ consisting of the parts of   $\mu$ that are larger than the repeated part is in $\mathcal C_1\cup\mathcal C_2$; 
    \item if $\widetilde \mu \neq \emptyset$ and  the repeated part of $\mu$ is $2k+2$, then $|\mu|-|\widetilde \mu|-(2k+2)>\mu_1$. 
\end{itemize} 
\smallskip

\noindent \underline{Case 4:} ($n$ even) Suppose $\lambda^o, \lambda^e\neq \emptyset$, $1 \in \lambda$,  $\ell_o(\lambda)=2$, and $\lambda^e_s-\lambda^o_1= 1$. If $\lambda^o_1=2k+1$ for some $k\geq 1$, then $$\varphi_4(2k+2, 2k+1, 1):=(2k+2, 2k+2),$$ and if $\ell(\lambda)\geq 4$, then $$\varphi_4(\lambda):=(\lambda^e_2, \ldots, \lambda^e_{\ell_e(\lambda)-1}, \lambda^e_s=2k+2, 2k+2)\cup(2^{\lambda_1/2}),$$ the partition obtained from $\lambda$ by removing the first part and the last two parts of $\lambda$ and inserting a part equal to $2k+2$ and $\lambda_1/2$ parts equal to $2$. Note that $\lambda_1\geq 6$.

Then, $\mathcal C_4(n)$ is the subset of partitions $\mu\in \mathcal P^{\od}_{\eu}(n)$ satisfying all of the following conditions:
\begin{itemize}
    \item $\mu=\mu^e\not\in \mathcal D(n)$;
    \item If $\ell(\mu)>2$, then $2\in \mu$, the two smallest part sizes in $\mu$ are repeated and the repeated part not equal to $2$ has multiplicity exactly two. Moreover, $2m_\mu(2)>\mu_1$.
\end{itemize}\smallskip

\noindent \underline{Case 5:} ($n$ odd, $n\geq 9$). Suppose $\lambda^e\neq \emptyset$, $\lambda^o=(1)$. We have several subcases. 
\begin{itemize}
    \item[(i)] If $\ell(\lambda)=2$, then $\varphi_4(\lambda)=\varphi_4(n-1,1):=(n-4, 2, 2)$. Then $$\mathcal C_{5,(i)}(n)=\{\mu\in \mathcal P^{\od}_{\eu}(n)\ \big| \  \ell(\mu)=3, \ \ell_0(\mu)=1, \ m_\mu(2)=2\}.$$
    \item[(ii)] If $\ell(\lambda)=3$, then $\varphi_4(\lambda):=\varphi_4(\lambda_1, \lambda_2, 1)=(\lambda_2+1, 2^{\lambda_1/2})$.
    Then, since $n\geq 9$,   
   $$\mathcal C_{5,(ii)}(n)=\{\mu\in \mathcal P^{\od}_{\eu}(n)\ \big| \  \ell_o(\mu)=1, \ \mu=(\mu_1, 2^{m_\mu(2)}), \ m_\mu(2)\geq 3, \ 2m_\mu(2)>\mu_1\}.$$ 
    \item[(iii)] If $\ell(\lambda)=4$ and $2\in \lambda$, then $\varphi_4(\lambda):=\varphi_4(\lambda_1, \lambda_2, 2,1)=(\lambda_1+1,  2^{\lambda_2/2+1})$.  Then,   $$\mathcal C_{5,(iii)}(n)=\{\mu\in \mathcal P^{\od}_{\eu}(n)\ \big| \ \ell_o(\mu)=1, \  \mu=(\mu_1^o, 2^{m_\mu(2)}), \ m_\mu(2)\geq 3, \ 2m_\mu(2)<\mu_1\}.$$
    \item[(iv)] If $\ell(\lambda)\geq 4$ and $2\not\in \lambda$, then $\varphi_4(\lambda):=(\lambda_1+1, \lambda_2, \ldots, \lambda_{\ell(\lambda)-2}, 2^{\lambda_{\ell(\lambda)-1}/2})$, the partition obtained from $\lambda$ by adding one to the first part of $\lambda$, removing the last two parts of $\lambda$,  and inserting $\lambda_{\ell(\lambda)-1}/2$ parts equal to $2$. Then, $\mathcal C_{5,(iv)}(n)$ is the subset of partitions $\mu\in \mathcal P^{\od}_{\eu}(n)$ 
    satisfying all of the following conditions:
\begin{itemize}
    \item[$\bullet$] $\ell_o(\mu)=1$;
    \item[$\bullet$] $\mu^e$ has at least one part greater than $2$;
    \item[$\bullet$] the only repeated part of $\mu$ is $2$ and $2m_\mu(2)<a_{>2}$. 
\end{itemize}
Note that if $\mu\in \mathcal C_{5,(iv)}(n)$, then $2m_\mu(2)<
\mu_1$. 
\item[(v)] If $\ell(\lambda)\geq 5$ and $2\in \lambda$, then 
$\varphi_4(\lambda):=(\lambda_2+1, \lambda_3, \ldots, \lambda_{\ell(\lambda)-2}, 2^{\lambda_1/2+1}),$ the partition obtained from $\lambda$ by 
removing the first and last part of $\lambda$, adding one to second part of $\lambda$, and inserting $\lambda_1/2$ parts equal to $2$.  Then, $\mathcal C_{5,(v)}(n)$ is the subset of partitions $\mu\in \mathcal P^{\od}_{\eu}(n)$ satisfying all of the following conditions:
\begin{itemize}
    \item[$\bullet$] $\ell_o(\mu)=1$;
    \item[$\bullet$] $\mu^e$ has at least two parts greater than $2$;
    \item[$\bullet$] the only repeated part of $\mu$ is $2$ and $2m_\mu(2)>\mu_1+2$. 
\end{itemize}
\end{itemize}
Let $\mathcal C_5(n)=\mathcal C_{5,(i)}\cup \mathcal C_{5,(ii)}\cup \mathcal C_{5,(iii)}\cup \mathcal C_{5,(iv)}\cup \mathcal C_{5,(v)}$. Then, $\mathcal C_{5}(n)$ is the subset of partitions $\mu\in \mathcal P^{\od}_{\eu}(n)$ satisfying all of the following conditions: 
\begin{itemize}
    \item $\ell_o(\mu)=1$;
    \item the only repeated part of $\mu$ is $2$;

    \item if $\mu$ has exactly one even part greater than $2$,  then $2m_\mu(2)<a_{>2}$;
    
   \item if $\mu$ has at least two  even parts greater than $2$, then $2m_\mu(2)<a_{>2}$ or $2m_\mu(2)>\mu_1+2$.
\end{itemize}

Clearly, the sets $\mathcal C_i(n)$, $1\leq i\leq 5$, are mutually disjoint. \smallskip

The mapping  $$\varphi_4:\mathcal P_{\od}^{\ed}(n) \to \displaystyle \bigcup_{i=1}^5\mathcal C_i(n)$$ is a bijection. To see that $\varphi_2$ is invertible, let $\mu\in \ds\bigcup_{i=1}^5\mathcal C_i(n)$. \smallskip

If $\mu\in \mathcal C_1(n)$, then $\varphi_4^{-1}(\mu)=\mu$. 

If $\mu\in \mathcal C_2(n)$, then $\varphi_4^{-1}(\mu)$ is the partition obtained from $\mu$ by subtracting one from the first $\ell_m(\mu)$ parts of $\mu$ and adding one to the last $\ell_m(\mu)$ parts of $\mu$. 

If $\mu\in \mathcal C_3(n)$, let $\widetilde \mu$ be as in the description of $\mathcal C_3(n)$. Then $$\varphi_4^{-1}(\mu)=\varphi_4^{-1}(\widetilde \mu) \cup(|\mu|-|\widetilde \mu|-(2k+2), 2k+1, 1).$$

If $\mu\in \mathcal C_4(n)$, then
\begin{itemize}
    \item if $\ell(\mu)=2$, then $\varphi_4^{-1}(\mu)=(\mu_1, \mu_2-1, 1)$;
    \item if $\ell(\mu)>2$ and the repeated part greater than $2$ is $2k+2$ for some $k\geq 1$, then $$\varphi_4^{-1}(\mu)=(\mu\setminus (2k+2, 2^{m_\mu(2)}))\cup (2m_\mu(2), 2k+1, 1).$$
\end{itemize}

If $\mu\in \mathcal C_5(n)$, then
\begin{itemize}
    \item if $\ell(\mu)=3$, then $\varphi_4^{-1}(\mu)=\varphi_4^{-1}(\mu_1,2,2)=(\mu_1+3, 1)$;
    \item If $\ell(\mu)\geq 4$ and $\mu$ has no even part greater than $2$, then $$\varphi_4^{-1}(\mu)=\begin{cases}
        (2m_\mu(2), \mu_1-1, 1), & \text{ if } 2m_\mu(2)>\mu_1, \\ (\mu_1-1, 2m_\mu(2)-2, 2, 1), & \text{ if } 2m_\mu(2)<\mu_1;
    \end{cases} $$
    \item If $\ell(\mu)\geq 4$ and $\mu$ has an even part greater than $2$, then $$\varphi_4^{-1}(\mu)=\begin{cases}(\mu\setminus(\mu_1, 2^{m_\mu(2)}))\cup(\mu_1-1, 2m_\mu(2), 1), & \text{ if } 2m_\mu(2)<a_{>2}, \\ (\mu\setminus(\mu_1, 2^{m_\mu(2)-1}))\cup(2m_\mu(2)-2,\mu_1-1,  1), & \text{ if } 2m_\mu(2)>\mu_1+2.
    \end{cases}$$
\end{itemize}

If $n\geq 8$ is even, then the partition $(n-4,2,2)\in \mathcal P^{\od}_{\eu}(n)$ is not in $\bigcup_{i=1}^5 \mathcal C_i(n)$. If $n\geq 9$ is odd, then for $n=4k+1$ for some $k\geq 2$, the partition $(2k+1, 2k)\in \mathcal P^{\od}_{\eu}(n)$ is not in $\bigcup_{i=1}^5 \mathcal C_i(n)$, and for  $n=4k+3$ for some $k\geq 2$, the partition $(2k+1, 2k,2)\in \mathcal P^{\od}_{\eu}(n)$ is not in $\bigcup_{i=1}^5 \mathcal C_i(n)$.  Together with the combinatorial injection $\varphi_4$, this  proves the following theorem.

\begin{theorem} The inequality $p_{\od}^{\ed}(n) < p_{\eu}^{\od}(n)$ holds for all $n\geq 8$.
    \end{theorem}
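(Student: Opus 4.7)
The plan is to verify the injection $\varphi_4$ constructed above is well-defined and has the image precisely $\bigcup_{i=1}^5 \mathcal C_i(n)$, and then exhibit explicit partitions in $\mathcal P_{\eu}^{\od}(n)\setminus \bigcup_{i=1}^5 \mathcal C_i(n)$ for every $n\geq 8$. Since an injection already gives $p_{\od}^{\ed}(n)\leq p_{\eu}^{\od}(n)$, exhibiting even one missed partition promotes the inequality to strict.

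First, I would check well-definedness case by case. In Case~1 there is nothing to check beyond noting that $\mathcal D(n)\subseteq \mathcal P^{\od}_{\eu}(n)$. In Case~2 the construction is an application of the $\varphi_1$-type operation from Section~\ref{s1}; because $1\notin\lambda$, subtracting one from the last $\ell_m(\lambda)$ parts does not destroy the separation-by-parity condition, and the resulting partition lies in $\mathcal P^{\od}_{\eu}(n)\cap\mathcal D(n)$ with the indicated gap $\mu_s^o-\mu_1^e\geq 3$. In Cases~3 and 4, the key is that the amputation of the last two parts $(2k+1,1)$ together with $\lambda_1$ leaves a partition $\eta\in\mathcal P_{\od}^{\ed}$ of size $n-\lambda_1-(2k+2)$ satisfying the hypotheses of an earlier case, so one can invoke $\varphi_4(\eta)$ inductively; the re-insertion of $(2k+2)^{q+1}$ (plus possibly $r$) in Case~3, or $(2k+2)$ together with $(2^{\lambda_1/2})$ in Case~4, is designed to keep even parts below the odd parts of $\varphi_4(\eta)$, which one must verify using the inequality $|\mu|-|\widetilde\mu|-(2k+2)>\mu_1$ recorded in the description. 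Case~5 subdivides according to $\ell(\lambda)$ and whether $2\in\lambda$; in each subcase the output is manifestly in $\mathcal P^{\od}_{\eu}(n)$ and the stated conditions on $m_\mu(2)$ and $a_{>2}$ are read off directly from $\lambda_{\ell(\lambda)-1}$ and $\lambda_1$.

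Next, I would show that the image of Case~$i$ coincides with the claimed $\mathcal C_i(n)$ by checking that the inverse formulas recorded above actually produce elements of $\mathcal P_{\od}^{\ed}(n)$ when applied to an arbitrary $\mu\in \mathcal C_i(n)$, and that $\varphi_4\circ \varphi_4^{-1}=\mathrm{id}$ and $\varphi_4^{-1}\circ \varphi_4=\mathrm{id}$ on the matched cases. For Case~3, this is again inductive: if $\mu\in\mathcal C_3(n)$ with repeated part $2k+2$, stripping off the $(2k+2)^{q+1}$ block (and the $r$-part if present) returns $\widetilde\mu\in\mathcal C_1\cup\mathcal C_2$, and applying $\varphi_4^{-1}$ to $\widetilde\mu$ together with appending $(|\mu|-|\widetilde\mu|-(2k+2),\,2k+1,\,1)$ rebuilds a valid $\lambda\in\mathcal P_{\od}^{\ed}(n)$; the inequality $|\mu|-|\widetilde\mu|-(2k+2)>\mu_1$ is exactly what ensures the reconstructed first part exceeds every odd and even part of $\widetilde\mu$. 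Then I would verify that $\mathcal C_1,\ldots,\mathcal C_5$ are pairwise disjoint: $\mathcal C_1$ requires distinctness, Cases~2 is the remaining distinct portion with $\mu^o,\mu^e\neq\emptyset$, Case~3 forces a single even repeated part of size $\geq 4$ with an odd part above, while Case~4 forces $\mu$ to have no odd part and the repeated part other than $2$ to have multiplicity exactly two, and Case~5 forces $\ell_o(\mu)=1$ with $2$ as the unique repeated part.

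Finally, to promote $\leq$ to $<$ for all $n\geq 8$, I would use the partitions explicitly listed just before the theorem: for even $n\geq 8$, $(n-4,2,2)\in\mathcal P^{\od}_{\eu}(n)$, but it has $\ell_o=0$ with a repeated part $2$, which is incompatible with every $\mathcal C_i$ (it has no odd part, violating Cases~3 and 5; it is not in $\mathcal D$, violating Case~1; and it fails the Case~4 multiplicity requirement $2m_\mu(2)>\mu_1$). For $n=4k+1$, $k\geq 2$, the partition $(2k+1,2k)$ has two parts only with no repeated part, and is clearly outside the union; for $n=4k+3$, $k\geq 2$, the partition $(2k+1,2k,2)$ likewise fails every case. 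The main obstacle will be the bookkeeping in Cases~3 and 5 to confirm the inverses indeed land in $\mathcal P_{\od}^{\ed}(n)$ (in particular, that the single odd part produced by the inverse is smaller than every even part of $\widetilde\mu$ or of $\varphi_4^{-1}(\widetilde\mu)$), which is precisely what the auxiliary conditions on $\mu_1$, $a_{>2}$ and $m_\mu(2)$ in the definitions of $\mathcal C_3$ and $\mathcal C_5$ encode.
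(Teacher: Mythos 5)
Your proposal follows the paper's own argument: it verifies that the map $\varphi_4$ defined in this subsection is a bijection from $\mathcal P_{\od}^{\ed}(n)$ onto $\bigcup_{i=1}^5\mathcal C_i(n)$ (checking well-definedness, disjointness of the $\mathcal C_i(n)$, and the inverse formulas case by case), and then uses the same explicit partitions $(n-4,2,2)$, $(2k+1,2k)$, $(2k+1,2k,2)$ lying outside the image to obtain strictness for all $n\geq 8$. This is exactly the proof given in the paper, so there is nothing to add beyond completing the routine case verifications you outline.
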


\subsubsection{Examples}  To help clarify the map $\varphi_4$, we provide examples for the many cases. 

\begin{example}
Let $n = 35$ and $\lambda = (16, 12, 7)$. This falls under Case 2 with $\ell_m(\lambda) = \ell_o(\lambda)=1$. Thus, $\varphi_4(\lambda) = (17, 12, 6)$.\medskip

\small
\begin{center}
 \ytableausetup{smalltableaux}
$\lambda =$ \ydiagram [*(white) \bullet]
  {16 +0, 12 +0, 6 + 1}
  *[*(white)]{16, 12, 7}\ $\mapsto \  \varphi_4(\lambda) =$
\ydiagram [*(white) \bullet]
  {16 +1, 12 + 0, 6 + 0}
  *[*(white)]{17, 12, 6}
\end{center} 
\end{example}\smallskip

\begin{example}
Let $n = 61$ and $\lambda = (20, 16, 12, 7, 5, 1)$. This falls under Case 3 with $k = 2, q = 3, r = 2$, and $\eta = (16, 12, 7)$. Thus, $\varphi_4(\eta) = (17, 12, 6)$ and $\varphi_4(\lambda) = (17, 12, 6^5, 2)$.\medskip

\small
\begin{center}
 \ytableausetup{smalltableaux}
$\lambda =$ \ydiagram [*(white) \bullet]
  {20 , 16 + 0, 12+ 0, 7 + 0, 5, 1}
  *[*(white) \star ]{20 + 0 , 16 + 0, 12+ 0, 6 + 1, 5 + 0, 1 + 0}
  *[*(white)]{20, 16, 12, 7, 5, 1}\ $\mapsto \  \varphi_4(\lambda) =$
\ydiagram [*(white) \bullet]
  {17 + 0, 12 + 0, 6 + 0, 6, 6, 6, 6,2}
  *[*(white) \star ]{16 +1, 12 + 0, 6 + 0, 6 + 0, 6 + 0, 6 + 0, 6 + 0, 2+0}
  *[*(white)]{17, 12, 6, 6, 6, 6, 6,2}
\end{center} 
\end{example}\smallskip

\begin{example}
Let $n = 60$ and $\lambda = (20, 16, 12, 6, 5, 1)$. This falls under Case 4 with $k = 2$. Thus, $\varphi_4(\lambda) = (16, 12, 6^2, 2^{10})$.\medskip

\small
\begin{center}
 \ytableausetup{smalltableaux}
$\lambda =$ \ydiagram [*(white) \bullet]
  {20 , 16 + 0, 12+ 0, 6 + 0, 5 + 0, 1 + 0}
  *[*(white) \star ]{20 + 0 , 16 + 0, 12+ 0, 6 + 0, 5, 1}
  *[*(white)]{20, 16, 12, 6, 5, 1}\ $\mapsto \  \varphi_4(\lambda) =$
\ydiagram [*(white) \bullet]
  {16 + 0, 12+0, 6+0, 6 + 0, 2, 2, 2, 2, 2, 2, 2, 2, 2, 2}
   *[*(white) \star ]{16 + 0, 12+0, 6+0, 6, 2 + 0, 2 + 0, 2 + 0, 2 + 0, 2 + 0, 2 + 0, 2 + 0, 2 + 0, 2 + 0, 2 + 0}
  *[*(white)]{16, 12, 6, 6, 2, 2, 2, 2, 2, 2, 2, 2, 2, 2}
\end{center} 
\end{example}\smallskip

The remaining examples belong to Case 5: $\lambda^e \neq \emptyset$ and $\lambda^o = (1)$, which is broken into several subcases. 

\begin{example}
Let $n = 9$ and $\lambda = (8, 1)$. This falls under Subcase $5(i)$. Thus, $\mu=\varphi_4(\lambda) = (n - 4, 2^2) = (5, 2^2)$. Note that $m_\mu(2) = 2$.
\end{example}

\begin{example}
Let $n = 13$ and $\lambda = (8, 4, 1)$. This falls under Subcase $5(ii)$. Thus, $1$ is added to the second part, and the first part is split into $4$ copies of $2$ to give $\mu=\varphi_4(\lambda) = (5, 2^4)$. Note that $2m_\mu(2) > \mu_1$.\medskip

\begin{center}
 \ytableausetup{smalltableaux}
$\lambda =$ \ydiagram [*(white) \bullet]
  {8,  4 + 0, 1 + 0}
  *[*(white) \star ]{8 + 0,  4 + 0, 1}
  *[*(white)]{8, 4, 1}\ $\mapsto \  \varphi_4(\lambda) =$
\ydiagram [*(white) \bullet]
  {5 + 0, 2, 2, 2, 2}
  *[*(white) \star ]{4 + 1, 2 + 0, 2 + 0, 2 + 0, 2 + 0}
  *[*(white)]{5, 2, 2, 2, 2}
\end{center}

\end{example}\smallskip

\begin{example}
Let $n = 25$ and $\lambda = (12, 10, 2, 1)$. This falls under Subcase $5(iii)$. Thus, $1$ is added to the first part, and the second part is split into $5$ copies of $2$ to give $\mu=\varphi_4(\lambda) = (13, 2^6)$. Note that $2m_\mu(2) < \mu_1$. \medskip

\begin{center}
 \ytableausetup{smalltableaux}
$\lambda =$ \ydiagram [*(white) \bullet]
  {12 + 0, 10, 2 + 0, 1 + 0}
  *[*(white) \star ]{12 + 0, 10 + 0, 2 + 0, 1}
  *[*(white)]{12, 10, 2, 1}\ $\mapsto \  \varphi_4(\lambda) =$
\ydiagram [*(white) \bullet]
  {13 + 0, 2 + 0, 2, 2, 2, 2, 2}
  *[*(white) \star ]{12 + 1, 2 + 0, 2, 2, 2, 2, 2}
  *[*(white)]{13, 2, 2, 2, 2, 2, 2}
\end{center} 

\end{example}\smallskip

\begin{example}
Let $n = 31$ and $\lambda = (12, 10, 8, 1)$. This falls under Subcase $5(iv)$. Thus, $1$ is added to the first part, and the second to last part is split into $4$ copies of $2$ to give $\mu=\varphi_4(\lambda) = (13, 10, 2^4)$. Note that $2m_\mu(2) < a_{>2} = 10$.\medskip

\begin{center}
 \ytableausetup{smalltableaux}
$\lambda =$ \ydiagram [*(white) \bullet]
  {12+ 0, 10 + 0, 8,  1 + 0}
   *[*(white) \star ]{12+ 0, 10 + 0, 8,  1}
  *[*(white)]{12, 10, 8, 1}\ $\mapsto \  \varphi_4(\lambda) =$
\ydiagram [*(white) \bullet]
  {13 + 0, 10+ 0, 2, 2, 2, 2}
  *[*(white) \star ]{12 + 1, 10+ 0, 2, 2, 2, 2}
  *[*(white)]{13, 10, 2, 2, 2, 2}
\end{center} 

\end{example}\smallskip

\begin{example}
Let $n = 33$ and $\lambda = (12, 10, 8, 2, 1)$. This falls under Subcase $5(v)$. Thus, $1$ is added to the second part, and the first part is split into $6$ copies of $2$ to give $\mu=\varphi_4(\lambda) = (11, 8, 2^7)$. Note that $2m_\mu(2) > \mu_1 + 2$.\medskip

\begin{center}
 \ytableausetup{smalltableaux}
$\lambda =$ \ydiagram [*(white) \bullet]
  {12, 10 + 0, 8 + 0, 2 + 0, 1 + 0}
  *[*(white) \star ]{12, 10 + 0, 8 + 0, 2 + 0, 1}
  *[*(white)]{12, 10, 8, 2, 1}\ $\mapsto \  \varphi_4(\lambda) =$
\ydiagram [*(white) \bullet]
  {11 + 0, 8 + 0, 2 + 0, 2, 2, 2, 2, 2, 2}
  *[*(white) \star ]{10 + 1, 8 + 0, 2 + 0, 2, 2, 2, 2, 2, 2}
  *[*(white)]{11, 8, 2, 2, 2, 2, 2, 2, 2}
\end{center} 

\end{example}\smallskip

    \section{proof of Conjecture \ref{conj1}}\label{sec_conj}

    Recall that $\overline{\mathcal P}_{\eu}^{\ou}(n)$ denotes the subset of partitions in $\mathcal P_{\eu}^{\ou}(n)$ in which the largest even part appears with odd multiplicity and all other parts appear with even multiplicity. A partition of $n$  with only odd parts is considered to be in  $\overline{\mathcal P}_{\eu}^{\ou}(n)$ with a single part equal to $0$ (and multiplicity one). We  set $\overline p_{\eu}^{\ou}(n):=|\overline{\mathcal P}_{\eu}^{\ou}(n)|$. 
Similarly,  $\overline{\mathcal P}^{\eu}_{\ou}(n)$ denotes  the subset of partitions in $\mathcal P^{\eu}_{\ou}(n)$ in which both even and odd parts occur, the largest even part and the largest odd part each appear with odd multiplicity, and all other parts appear with even multiplicity. We  set $\overline p^{\eu}_{\ou}(n):=|\overline{\mathcal P}^{\eu}_{\ou}(n)|$. Clearly, if $n$ is odd, $\overline p_{\eu}^{\ou}(n)=0$ and if $n$ is even, $\overline p^{\eu}_{\ou}(n)=0$. Moreover, if $n\geq 1$, then all partitions in  $\overline{\mathcal P}^{\eu}_{\ou}(2n+1)$ have even length.   

We prove the following theorem. 
\begin{theorem}
Let $n\geq 3$. Then,  $$\overline p_{\eu}^{\ou}(2n)<\overline p^{\eu}_{\ou}(2n+1).$$ 
\end{theorem}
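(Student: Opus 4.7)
The plan is to construct an explicit injection $\varphi:\overline{\mathcal P}_{\eu}^{\ou}(2n)\hookrightarrow\overline{\mathcal P}^{\eu}_{\ou}(2n+1)$ whose image misses at least one partition, yielding the strict inequality. First I will encode both sides by natural parameters. Each $\lambda\in\overline{\mathcal P}_{\eu}^{\ou}(2n)$ decomposes uniquely as $\lambda=(e_1)\cup 2\rho$, where $e_1\geq 0$ is the largest even part of $\lambda$ (with the convention $e_1=0$ when $\lambda$ has no even parts) and $\rho$ is a partition with odd parts $>e_1$, even parts $\leq e_1$, and $|\rho|=n-e_1/2$. Analogously, each $\mu\in\overline{\mathcal P}^{\eu}_{\ou}(2n+1)$ decomposes as $\mu=(O,E)\cup 2\tau$ with $O$ odd, $E$ even, $E>O$, and $\tau$ a partition whose odd parts lie in $[1,O]$ and whose even parts lie in $(O,E]$.

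I will define $\varphi$ by cases on whether $\lambda$ has odd parts. When $\lambda$ has no odd parts (so $\rho^o=\emptyset$ and $e_1\geq 2$), set $\varphi(\lambda)=\lambda\cup\{1\}$, corresponding to the triple $(1,e_1,\rho)$. When $\lambda$ has odd parts, let $\rho_1$ be the largest (necessarily odd) part of $\rho$ and define
\[
O=\rho_1,\qquad E=e_1+\rho_1+1,\qquad \tau=(\rho^o\setminus\{\rho_1\})\cup\bigsqcup_{e\in\rho^e}\{e-1,1\},
\]
so that each even part $e$ of $\rho^e$ is encoded in $\tau$ as the pair of odd parts $(e-1,1)$. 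A short size check gives $|\tau|=|\rho|-\rho_1=(2n+1-O-E)/2$, and since $e-1\leq e_1-1<\rho_1=O$, every part of $\tau$ is odd and at most $O$, placing $\varphi(\lambda)$ in $\overline{\mathcal P}^{\eu}_{\ou}(2n+1)$.

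For injectivity I will exhibit the inverse. If $\tau$ has an even part the preimage is in the first case, so $\lambda=(E)\cup 2\tau$. If $\tau$ has only odd parts we are in the second case (the borderline $\tau=\emptyset$ collapses to $\lambda=(2n)$ uniquely once $n\geq 2$): set $e_1=E-O-1$ and $\rho_1=O$. Then the parts of $\tau$ exceeding $e_1$ come from $\rho^o\setminus\{\rho_1\}$, while the parts at most $e_1-1$ encode $\rho^e$ via $m_{2k}(\rho^e)=m_{2k-1}(\tau)$ for $k\geq 2$ and $m_2(\rho^e)=\tfrac12\bigl(m_1(\tau)-m_3(\tau)-m_5(\tau)-\cdots\bigr)$, determining $\rho^e$ uniquely. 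To see that $\varphi$ is not onto I will take $\mu_0=(2n-2,1^3)\in\overline{\mathcal P}^{\eu}_{\ou}(2n+1)$, whose triple is $(1,2n-2,(1))$: since $\tau=(1)$ has no even parts, any preimage would come from the second case with $O=1$, forcing $\rho_1=1$, but $e_1=2n-4\geq 2$ for $n\geq 3$ requires $\rho^o$'s parts to exceed $e_1$, contradicting $\rho_1=1$.

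The main obstacle will be the second case of $\varphi$ and its invertibility: verifying that encoding each $e\in\rho^e$ as $(e-1,1)$ both preserves $|\rho|-\rho_1$ and cleanly separates from the contribution $\rho^o\setminus\{\rho_1\}$ inside $\tau$. The key structural observation is that $\rho^o\setminus\{\rho_1\}$ contributes parts strictly greater than $e_1$, whereas every part coming from the $(e-1,1)$ encoding is at most $e_1-1$, so the threshold $e_1$ cleanly partitions $\tau$ and makes the multiplicity-based recovery of $\rho^e$ unambiguous.
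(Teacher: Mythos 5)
Your proof is correct, and while it follows the same overall template as the paper's argument (an explicit injection defined by cases on which parities occur in $\lambda$, together with an explicit partition missed by the image), the injection itself is genuinely different in the main case. Your Case A ($\lambda^o=\emptyset$: append a part $1$) and your Case B specialized to $e_1=0$ ($\lambda^e=\emptyset$: turn one copy of the largest part $\rho_1$ into $\rho_1+1$) coincide with the paper's Cases 2 and 1. But when both parities occur, the paper merges the two copies of the largest (odd) part into a single even part $2\lambda_1$, subtracts $1$ from every remaining part and pads with $\ell(\lambda)-1$ ones, whereas you send the data $(e_1,\rho)$ to $(O,E,\tau)=\bigl(\rho_1,\ e_1+\rho_1+1,\ (\rho^o\setminus\{\rho_1\})\cup\bigcup_{e\in\rho^e}\{e-1,1\}\bigr)$; for instance $(5,5,3,3,2)\mapsto(10,2,2,1^5)$ under the paper's map but $(8,5,3,3)$ under yours. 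Your version exploits the halved-multiplicity parametrization of both sides, which makes the inverse a transparent decoding (split $\tau$ at the threshold $e_1=E-O-1$, which works because parts of $\rho^o$ exceed $e_1$ while the encoded pairs $\{e-1,1\}$ lie below it); the paper's version has a more uniform one-line formula and an image consisting of partitions with largest part $\equiv 2 \pmod 4$ and many parts equal to $1$. One small point of care: in your recovery formula $m_2(\rho^e)=\tfrac12\bigl(m_1(\tau)-m_3(\tau)-\cdots\bigr)$ the subtracted multiplicities must be restricted to parts of $\tau$ that are at most $e_1-1$, exactly as your threshold discussion indicates, since the parts of $\tau$ exceeding $e_1$ belong to $\rho^o$ and must not enter the sum. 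Your non-surjectivity witness $(2n-2,1^3)$ also differs from the paper's choices $(4(k-1),1^5)$ and $(4k,1^3)$ but is valid for all $n\geq 3$.
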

\begin{proof}
 Let $n\geq 3$. We create an injection $$\varphi_5:\overline{\mathcal P}_{\eu}^{\ou}(2n)\to \overline{\mathcal P}^{\eu}_{\ou}(2n+1).$$ 
 Let $\lambda\in \overline{\mathcal P}_{\eu}^{\ou}(2n)$.  We consider three cases.  For $1\leq i\leq 3$, we denote by $\mathcal E_i(n)$ the image  of the set of all partitions in Case $i$ under $\varphi_5$.  At the end of this section, we give examples for each case.\smallskip

\noindent \underline{Case 1:}
If $\lambda^e=\emptyset$, define $$\varphi_5(\lambda)=\varphi_5(\lambda_1, \lambda_2, \ldots, \lambda_{\ell(\lambda)}):=(\lambda_1+1, \lambda_2, \ldots, \lambda_{\ell(\lambda)}).$$ Then $$\mathcal E_1(2n+1)=\{\mu\in \overline{\mathcal P}^{\eu}_{\ou}(2n+1)\ \big| \ \ell_e(\mu)=1, \mu_1-\mu_2=1\}.$$
\smallskip 

 \noindent \underline{Case 2:}   
If $\lambda^o=\emptyset$, define $$\varphi_5(\lambda):=\lambda\cup(1).$$ Then $$\mathcal E_2(2n+1)=\{\mu\in \overline{\mathcal P}^{\eu}_{\ou}(2n+1)\ \big| \ \mu^o=(1)\}.$$
\smallskip 

Note that if  $\mathcal E_1(2n+1)\cap \mathcal E_2(2n+1)\neq\emptyset$, then $n=1$. \smallskip

 \noindent \underline{Case 3:}   
If $\lambda^e, \lambda^o\neq\emptyset$, define $$\varphi_5(\lambda)=\varphi_5(\lambda_1, \lambda_2, \ldots, \lambda_{\ell(\lambda)}):=(\lambda_1+\lambda_2, \lambda_3-1, \ldots, \lambda_{\ell(\lambda)}-1)\cup (1^{{\ell(\lambda)}-1}),$$ the partition obtained from $\lambda$ by merging the first two parts of $\lambda$, subtracting one from each of the remaining parts, and inserting $\ell(\lambda) - 1$ parts of size $1$. Then 
$$\mathcal E_3(2n+1)=\left\{\mu\in \overline{\mathcal P}^{\eu}_{\ou}(2n+1)\ \left| \begin{array}{l} \mu_1\equiv 2\text{ mod }4,  \ell_o(\mu)\geq 3, \\  \ \\  \mu_1-\mu_2\geq \mu_2+2\geq 3,  m_\mu(1)\geq \ell_{>1}(\mu)\end{array}\right.\right\}.$$
\smallskip 

Clearly, the sets $\mathcal E_i(2n+1)$, $1\leq i\leq 3$, are mutually disjoint. \smallskip

The mapping  $$\varphi_5:\overline{\mathcal P}_{\eu}^{\ou}(2n) \to \displaystyle \bigcup_{i=1}^3\mathcal E_i(2n+1)$$ is a bijection. To see that $\varphi_5$ is invertible, let $\mu\in \ds\bigcup_{i=1}^3\mathcal E_i(2n+1)$. \smallskip

If $\mu \in \mathcal E_1(2n+1)$, then $\varphi_5^{-1}(\mu)=(\mu_1-1, \mu_2, \ldots, \mu_{\ell(\mu)})$. 

If $\mu \in \mathcal E_2(2n+1)$, then $\varphi_5^{-1}(\mu)=\mu\setminus(1)$. 

If $\mu \in \mathcal E_3(2n+1)$, then $\varphi_5^{-1}(\mu)=(\mu_1/2, \mu_1/2, \mu_2+1, \ldots, \mu_{\ell(\mu)/2}+1)$, the partition obtained from $\mu$ by removing $\ell(\mu)/2$ parts equal to $1$, splitting the first part into two equal parts, and adding one to each of the remaining parts. 

Finally, we show that if $n\geq 3$, then $\overline{\mathcal P}^{\eu}_{\ou}(2n+1)\setminus \ds\bigcup_{i=1}^3\mathcal E_i(2n+1)\neq \emptyset$.

Let $n\geq 3$. If $2n+1=4k+1$ for $k\geq 2$, then $$(4(k-1), 1^5)\in \overline{\mathcal P}^{\eu}_{\ou}(2n+1)\setminus \ds\bigcup_{i=1}^3\mathcal E_i(2n+1).$$ If $2n+1=4k+3$ for $k\geq 1$, then $$(4k, 1^3)\in \overline{\mathcal P}^{\eu}_{\ou}(2n+1)\setminus \ds\bigcup_{i=1}^3\mathcal E_i(2n+1).$$

\end{proof}

\subsection{Examples}  To help clarify the map $\varphi_5$, we provide examples for the three cases.

\begin{example}
    Let $n = 12$. We give examples for partitions in $\overline{\mathcal P}_{\eu}^{\ou}(24)$ in Cases 1 and 2. 
\begin{itemize}
\item[\underline{Case 1:}] If $\lambda = (7^2, 5^2)$, then $\varphi_5(\lambda) = (8, 7, 5^2) \in \overline{\mathcal P}_{\ou}^{\eu}(25).$\smallskip 

\item[\underline{Case 2:}] If $\lambda = (12, 6^2)$, then $\varphi_5(\lambda) = (12, 6^2, 1) \in \overline{\mathcal P}_{\ou}^{\eu}(25).$\smallskip 
\end{itemize}
\end{example}

\begin{example}
    Let $n = 26$ and $\lambda = (11^2, 9^2, 8, 2^2)$. Then, $\lambda \in \overline{\mathcal P}_{\eu}^{\ou}(52)$ and $\varphi_5(\lambda) = (22, 8^2, 7, 1^8) \in \overline{\mathcal P}_{\ou}^{\eu}(53).$
\end{example}

    \section{Concluding Remarks} 

   We defined combinatorial injections to prove several inequalities between the numbers of different partitions of $n$ with parts separated by parity. However, we were unable to find combinatorial arguments for  the following inequalities in \eqref{chain}: 
    \begin{align}
\label{l-2}  p^{\eu}_{\od}(n)  & <p_{\eu}^{\od}(n);\\
\label{l-1}  p_{\eu}^{\od}(n) & 
<p_{\ed}^{\ou}(n);\\
\label{last} p_{\eu}^{\ou}(n) & <p^{\ed}_{\ou}(n).
\end{align}

We make some observations about inequality \eqref{last}. In particular, we show that it suffices to prove the inequality for even $n$. 

First, note that in \cite{BCN} the authors show that the injection $\lambda \mapsto \lambda\cup (1)$ proves combinatorially that $p^{\ed}_{\ou}(n)\leq p^{\ed}_{\ou}(n+1)$ for all $n\geq 0$. In addition, for $n\geq 2$, the partition $(n)$ is in $\mathcal P^{\ed}_{\ou}(n+1)$ but not in the image of the injection above. Thus, for $n\geq 2$, we have the strict inequality $p^{\ed}_{\ou}(n)< p^{\ed}_{\ou}(n+1).$ This inequality together with the result of the next proposition shows that it suffices to prove inequality \eqref{last} for even $n$.

\begin{proposition} \label{p2} Let $k\geq 0$. Then $$p_{\eu}^{\ou}(2k)=p_{\eu}^{\ou}(2k+1).$$
\end{proposition}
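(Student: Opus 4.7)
The plan is to exhibit an explicit bijection $\varphi_6:\mathcal P_{\eu}^{\ou}(2k)\to \mathcal P_{\eu}^{\ou}(2k+1)$. First I observe a parity fact: since $|\lambda^e|$ is always even, the number of odd parts of any $\lambda\in\mathcal P_{\eu}^{\ou}(n)$ has the same parity as $n$. Thus partitions in $\mathcal P_{\eu}^{\ou}(2k)$ have an even number of odd parts, while partitions in $\mathcal P_{\eu}^{\ou}(2k+1)$ have an odd number; the map must flip this parity. A natural first guess is $\lambda\mapsto \lambda\cup(1)$, but this fails when $\lambda^e\neq\emptyset$, since $1$ would then violate the condition that odd parts exceed even parts.

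I would resolve this by splitting the definition into two cases. If $\lambda^e=\emptyset$ (so $\lambda$ is all odd, possibly empty), set $\varphi_6(\lambda):=\lambda\cup(1)$. If $\lambda^e\neq\emptyset$, form $\varphi_6(\lambda)$ from $\lambda$ by replacing the largest even part $\lambda^e_1$ with $\lambda^e_1+1$, which is odd. For well-definedness in the second case, I would note that every odd part of $\lambda$ is strictly larger than $\lambda^e_1$ and, being odd of opposite parity, must be at least $\lambda^e_1+1$; hence the new odd part $\lambda^e_1+1$ is no larger than any existing odd part and the ordering condition is preserved, while the new largest even part $\lambda^e_2$ (if any) still lies below every odd part.

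To invert, given $\mu\in\mathcal P_{\eu}^{\ou}(2k+1)$, I would first observe the key dichotomy: $1\in\mu$ forces $\mu^e=\emptyset$ (since $1$ would otherwise have to exceed an even part $\geq 2$). So if $1\in\mu$, define $\varphi_6^{-1}(\mu):=\mu\setminus(1)$; otherwise every odd part of $\mu$ is $\geq 3$, and I would decrement the smallest odd part by $1$, producing a new even part $\mu^o_s-1$. Well-definedness here requires checking that the resulting partition lies in $\mathcal P_{\eu}^{\ou}(2k)$, which follows from $\mu^o_s-1\geq \mu^e_1$ (derived from $\mu^o_s>\mu^e_1$ and parity). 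Verifying that $\varphi_6$ and $\varphi_6^{-1}$ compose to the identity is then a routine case analysis, since the case $\lambda^e=\emptyset$ in the forward map matches exactly the case $1\in\mu$ in the backward map.

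The main obstacle is ensuring that the two forward cases and the two inverse cases line up correctly; once one notices that ``$1\in\mu$'' is equivalent to ``$\mu$ is all odd'' in $\mathcal P_{\eu}^{\ou}(2k+1)$, the case splits match and all the required inequalities to maintain the order condition follow from parity.
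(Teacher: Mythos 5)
Your proposal is correct and is essentially the paper's own proof: the same two-case map (adjoin a part $1$ when $\lambda^e=\emptyset$, otherwise add $1$ to the largest even part) with the same inverse (delete a $1$, otherwise subtract $1$ from the smallest odd part), and your parity checks for well-definedness match the implicit ones in the paper.
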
 
\begin{proof}Let $k\geq 0$. We define a bijection $\psi: \mathcal P_{\eu}^{\ou}(2k)\to \mathcal P_{\eu}^{\ou}(2k+1)$.  

Let $\lambda\in \mathcal P_{\eu}^{\ou}(2k)$.

If $\lambda^e=\emptyset$, define $\psi(\lambda)=\lambda\cup(1)$. 

If $\lambda^e\neq\emptyset$, define $\psi(\lambda)$ to be the partition obtained from $\lambda$ by adding $1$ to the largest even part of $\lambda$. 

To see that $\psi$ is invertible, let $\mu\in\mathcal P_{\eu}^{\ou}(2k+1)$. Clearly, $\mu^o\neq \emptyset$.

If  $1\in \mu$ (which implies that $\mu^e=\emptyset$), then $\psi^{-1}(\mu)=\mu\setminus(1)$ is the partition obtained from $\mu$ by removing a part equal to $1$. 

If $1\not \in \mu$, then $\psi^{-1}(\mu)$ is the partition obtained from $\mu$ by subtracting $1$ from the smallest odd part of $\mu$. 
\end{proof}

While not needed for the argument that proving inequality \eqref{last} for even $n$ is sufficient, we also have the following strict inequality for $p_{\eu}^{\ou}(n)$. 

\begin{proposition} Let $k\geq 1$. Then \begin{equation} \label{o2} p_{\eu}^{\ou}(2k-1)< p_{\eu}^{\ou}(2k).\end{equation}
    \end{proposition}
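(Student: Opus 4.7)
The plan is to exhibit a very simple injection
\[
\iota:\mathcal P_{\eu}^{\ou}(2k-1)\to\mathcal P_{\eu}^{\ou}(2k),\qquad \lambda\mapsto \lambda\cup(1),
\]
and then identify a single explicit element of the codomain that is missing from the image.

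First I would verify that $\iota$ lands in the correct set. Given $\lambda\in\mathcal P_{\eu}^{\ou}(2k-1)$, the partition $\lambda\cup(1)$ is a partition of $2k$; moreover every even part of $\lambda\cup(1)$ is already an even part of $\lambda$ (since $1$ is odd), and every even part of $\lambda$ is at least $2>1$ and strictly greater than every odd part of $\lambda$. So all even parts of $\lambda\cup(1)$ are strictly greater than all odd parts, i.e.\ $\lambda\cup(1)\in\mathcal P_{\eu}^{\ou}(2k)$.

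Next, $\iota$ is injective: given $\iota(\lambda)=\mu$, we recover $\lambda=\mu\setminus(1)$ (which is well-defined as a multiset operation because $m_\mu(1)\geq 1$ by construction). Hence the image $\iota\bigl(\mathcal P_{\eu}^{\ou}(2k-1)\bigr)$ is exactly the subset of $\mu\in\mathcal P_{\eu}^{\ou}(2k)$ with $m_\mu(1)\geq 1$.

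Finally, for $k\geq 1$ the one-part partition $(2k)$ lies in $\mathcal P_{\eu}^{\ou}(2k)$ (its only part is even, and there are no odd parts to violate the parity-separation condition), yet $m_{(2k)}(1)=0$, so $(2k)$ is not in the image of $\iota$. This gives the strict inequality $p_{\eu}^{\ou}(2k-1)<p_{\eu}^{\ou}(2k)$. The argument is essentially routine once one notices that the basic $\lambda\mapsto\lambda\cup(1)$ trick respects the $\mathcal P_{\eu}^{\ou}$ condition; there is no real obstacle, which is presumably why the authors relegate \eqref{o2} to a short aside after Proposition \ref{p2}.
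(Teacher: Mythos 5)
Your injection is not well-defined, because you have read the parity-separation condition backwards. In the paper's notation the superscript parts are the \emph{larger} ones, so $\mathcal P_{\eu}^{\ou}(n)$ is the set of partitions of $n$ in which every odd part exceeds every even part; the set you describe (all even parts greater than all odd parts) is $\mathcal P_{\ou}^{\eu}(n)$. Consequently, appending a part equal to $1$ creates an odd part lying \emph{below} the even parts, which destroys membership whenever $\lambda^e\neq\emptyset$: for example $\lambda=(3,2)\in\mathcal P_{\eu}^{\ou}(5)$, but $\lambda\cup(1)=(3,2,1)$ has the odd part $1$ smaller than the even part $2$, so it is not in $\mathcal P_{\eu}^{\ou}(6)$. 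Your verification paragraph asserts exactly the wrong condition for this symbol, and your description of the image as $\{\mu\in\mathcal P_{\eu}^{\ou}(2k):m_\mu(1)\geq 1\}$ fails for the same reason. The trick $\lambda\mapsto\lambda\cup(1)$ is sound only when the odd parts sit at the bottom (e.g.\ for $p^{\ed}_{\ou}$, which is precisely how the paper uses it in its concluding remarks); it cannot be applied to $p_{\eu}^{\ou}$ without modification.

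Any repair must treat the case $\lambda^e\neq\emptyset$ separately, which is what the paper does. Its proof composes two maps: the inverse of the bijection $\psi$ of Proposition \ref{p2} (which removes a part $1$ when $1\in\mu$, and otherwise subtracts $1$ from the smallest odd part), carrying $\mathcal P_{\eu}^{\ou}(2k-1)$ to $\mathcal P_{\eu}^{\ou}(2k-2)$, followed by the map $f$ adding $2$ to the largest part, carrying $\mathcal P_{\eu}^{\ou}(2k-2)$ into $\mathcal P_{\eu}^{\ou}(2k)$. The composite $f\circ\psi^{-1}$ is an injection whose image misses $(1^{2k})$, giving strictness for $k\geq 2$, and the case $k=1$ is checked directly ($p_{\eu}^{\ou}(1)=1<2=p_{\eu}^{\ou}(2)$). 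Your choice of missing element $(2k)$ is a correct element of $\mathcal P_{\eu}^{\ou}(2k)$, but since your map does not land in the claimed codomain, the argument as written does not establish \eqref{o2}.
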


    \begin{proof}  As noted in \cite{BCN}, the injection $$f(\eta)=f(\eta_1, \eta_2, \ldots, \eta_{\ell(\eta)}):=(\eta_1+2, \eta_2, \ldots, \eta_{\ell(\eta)})$$ shows that, for $n\geq 1$, we have $p_{\eu}^{\ou}(n)\leq p_{\eu}^{\ou}(n+2)$. Moreover, if $n\geq 1$, then the partition $(1^{n+2})$ is in $\mathcal P_{\eu}^{\ou}(n+2)$ but not in the image of $\mathcal P_{\eu}^{\ou}(n)$ under $f$. Hence, if $n\geq 1$, we have $p_{\eu}^{\ou}(n)< p_{\eu}^{\ou}(n+2)$.
    
    Inequality \eqref{o2} is clearly true for   $k=1$ since $p_{\eu}^{\ou}(1)=1$ and $p_{\eu}^{\ou}(2)=2$.
    If  $k\geq 2$, and $\psi$ is the bijection of Proposition \ref{p2}, the mapping $f\circ \psi^{-1}$ is an injection from $\mathcal P_{\eu}^{\ou}(2k-1)$ to $\mathcal P_{\eu}^{\ou}(2k)$. Since $f$ is not surjective, we have 
    $p_{\eu}^{\ou}(2k-1)< p_{\eu}^{\ou}(2k)$. 
     \end{proof}

We end this section by discussing further directions of study for partitions with parts separated by parity. 
 In addition to the modifiers $u$ (unrestricted) and $d$ (distinct), we could also include the modifier $nd$ to mean that the relevant parts (even or odd) are not all distinct.  For example, $\mathcal{P}^{\en}_{\ou}(n)$ is the set of partitions of $n$ with all even parts larger than all odd parts, there is at least one even part with multiplicity larger than 1, and odd parts unrestricted. From the inequalities \eqref{chain},  we can find several inequalities involving the modifier $nd$ (all holding for sufficiently large $n$).

First, consider the inequalities 
$p^{\eu}_{\od}(n)  <p_{\eu}^{\od}(n)
<p_{\ed}^{\ou}(n)<p_{\eu}^{\ou}(n) <p^{\eu}_{\ou}(n)$
and calculate several differences:

\begin{itemize}
\item $p^{\eu}_{\ou}(n) - p^{\eu}_{\od}(n) = p^{\eu}_{\on}(n),$
\smallskip

\item $p^{\ou}_{\eu}(n) - p^{\od}_{\eu}(n) = p^{\on}_{\eu}(n),$
\smallskip

\item $p^{\ou}_{\eu}(n) - p^{\ou}_{\ed}(n) = p^{\ou}_{\en}(n),$
\smallskip

\item $p^{\eu}_{\od}(n) - p^{\ed}_{\od}(n) = p^{\en}_{\od}(n).$
\end{itemize}
Therefore, $$p^{\on}_{\eu}(n)-p^{\ou}_{\en}(n) = p^{\ou}_{\eu}(n) - p^{\od}_{\eu}(n)-(p^{\ou}_{\eu}(n) - p^{\ou}_{\ed}(n))= p^{\ou}_{\ed}(n)-p^{\od}_{\eu}(n)>0$$ and 
\begin{align*}p^{\eu}_{\on}(n)- p^{\on}_{\eu}(n)& = p^{\eu}_{\ou}(n) - p^{\eu}_{\od}(n)-(p^{\ou}_{\eu}(n) - p^{\od}_{\eu}(n))\\ & = (p^{\eu}_{\ou}(n)-p^{\ou}_{\eu}(n) )+( p^{\od}_{\eu}(n)-p^{\eu}_{\od}(n)) >0.\end{align*}

This leads to the following inequalities. 
\begin{proposition} \label{p4} For $n$ sufficiently large, $$p^{\ou}_{\en}(n) < p^{\on}_{\eu}(n) < p^{\eu}_{\on}(n).$$
    
\end{proposition}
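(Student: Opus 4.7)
The plan is to read the proposition directly off the identities and rearrangements already assembled in the paragraphs preceding it. I would first note that for any choice of parity and "side" (super- vs.\ subscript), imposing the modifier $d$ cuts the unrestricted set into partitions with all relevant parts distinct and partitions with at least one repeated relevant part (the modifier $nd$). This yields the four subtraction identities
\begin{align*}
p^{\eu}_{\ou}(n)-p^{\eu}_{\od}(n) &= p^{\eu}_{\on}(n), & p^{\ou}_{\eu}(n)-p^{\od}_{\eu}(n) &= p^{\on}_{\eu}(n),\\
p^{\ou}_{\eu}(n)-p^{\ou}_{\ed}(n) &= p^{\ou}_{\en}(n), & p^{\eu}_{\od}(n)-p^{\ed}_{\od}(n) &= p^{\en}_{\od}(n),
\end{align*}
valid for all $n\geq 0$, since $\mathcal{P}^{\cdot d}_{\cdot}(n)\subseteq \mathcal{P}^{\cdot u}_{\cdot}(n)$ with complement exactly $\mathcal{P}^{\cdot nd}_{\cdot}(n)$ (and similarly for the subscript side).

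Next I would combine these identities the two ways already indicated in the preceding discussion:
$$p^{\on}_{\eu}(n)-p^{\ou}_{\en}(n) \;=\; \bigl(p^{\ou}_{\eu}(n)-p^{\od}_{\eu}(n)\bigr)-\bigl(p^{\ou}_{\eu}(n)-p^{\ou}_{\ed}(n)\bigr) \;=\; p^{\ou}_{\ed}(n)-p^{\od}_{\eu}(n),$$
and
$$p^{\eu}_{\on}(n)-p^{\on}_{\eu}(n) \;=\; \bigl(p^{\eu}_{\ou}(n)-p^{\ou}_{\eu}(n)\bigr) + \bigl(p^{\od}_{\eu}(n)-p^{\eu}_{\od}(n)\bigr).$$
Now invoke the chain \eqref{chain}. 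For $n$ sufficiently large it supplies the three strict inequalities $p^{\eu}_{\od}(n)<p^{\od}_{\eu}(n)$ (third and fourth terms of \eqref{chain}), $p^{\od}_{\eu}(n)<p^{\ou}_{\ed}(n)$ (fourth and fifth terms), and $p^{\ou}_{\eu}(n)<p^{\eu}_{\ou}(n)$ (sixth and eighth terms); here I use that the ordering of the sub/superscript letters is purely cosmetic, so $p_{\eu}^{\od}=p^{\od}_{\eu}$ etc. Plugging in shows both displayed differences are strictly positive, giving $p^{\ou}_{\en}(n)<p^{\on}_{\eu}(n)<p^{\eu}_{\on}(n)$.

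There is no real obstacle: once the four subtraction identities are written down and the chain \eqref{chain} is applied, the proposition follows at once. The only point requiring minor care is notational bookkeeping to be sure the canceling pairs in the two rearrangements really do match up, and to confirm that the three inequalities needed from \eqref{chain} are among those stated there rather than ones still open (namely \eqref{l-2}--\eqref{last}); they are, so the argument goes through for the same range of $n$ on which \eqref{chain} is known.
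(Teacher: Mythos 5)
Your proposal is correct and follows essentially the same route as the paper: the same four subtraction identities for the $nd$-modifier, the same two rearrangements of differences, and positivity deduced from the chain \eqref{chain} for sufficiently large $n$. One small bookkeeping point: two of the three inequalities you invoke are exactly \eqref{l-2} and \eqref{l-1}, which are not open statements but merely lack combinatorial proofs; since they hold for large $n$ by the asymptotics behind \eqref{chain}, this does not affect your argument.
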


\smallskip

Next, consider the inequalities $p_{\ed}^{\od}(n) <p^{\ed}_{\od}(n) <p^{\eu}_{\od}(n)  <p_{\eu}^{\od}(n)
<p_{\ed}^{\ou}(n)$
and calculate several differences:

\begin{itemize}

\item $p^{\ou}_{\ed}(n) - p^{\od}_{\ed}(n) = p^{\on}_{\ed}(n),$
\smallskip

\item $p^{\od}_{\eu}(n) - p^{\od}_{\ed}(n)  = p^{\od}_{\en}(n),$
\smallskip

\item $p^{\eu}_{\od}(n) - p^{\ed}_{\od}(n)  = p^{\en}_{\od}(n),$
\smallskip

\item $p^{\ou}_{\eu}(n) - p^{\ou}_{\ed}(n) = p^{\ou}_{\en}(n).$
\end{itemize}
Therefore. \begin{align*}p^{\od}_{\en}(n)-p^{\en}_{\od}(n)& = p^{\od}_{\eu}(n) - p^{\od}_{\ed}(n) -(p^{\eu}_{\od}(n) - p^{\ed}_{\od}(n))\\ & =(p^{\ed}_{\od}(n)- p^{\od}_{\ed}(n))+ (p^{\od}_{\eu}(n)-p^{\eu}_{\od}(n))>0\end{align*} and 
$$p^{\on}_{\ed}(n)-p^{\od}_{\en}(n) =p^{\ou}_{\ed}(n) - p^{\od}_{\ed}(n)-(p^{\od}_{\eu}(n) - p^{\od}_{\ed}(n))= p^{\ou}_{\ed}(n)-p^{\od}_{\eu}(n)>0$$

This leads to the following inequalities. 
\begin{proposition} \label{p5} For $n$ sufficiently large, $$p^{\en}_{\od}(n) < p^{\od}_{\en}(n) < p^{\on}_{\ed}(n).$$
    
\end{proposition}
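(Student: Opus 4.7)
The plan is to mirror the derivation used for Proposition~\ref{p4}: first convert each ``$\textrm{nd}$'' count into a difference of a ``$\textrm{u}$'' count and a ``$\textrm{d}$'' count, then reduce the two required strict inequalities to linear combinations of inequalities already recorded in~\eqref{chain}. Since a partition whose relevant parts are unrestricted but not all distinct is exactly one in which those parts are unrestricted minus one in which they are all distinct, we have the three elementary identities
\begin{equation*}
p^{\on}_{\ed}(n)=p^{\ou}_{\ed}(n)-p^{\od}_{\ed}(n),\qquad p^{\od}_{\en}(n)=p^{\od}_{\eu}(n)-p^{\od}_{\ed}(n),\qquad p^{\en}_{\od}(n)=p^{\eu}_{\od}(n)-p^{\ed}_{\od}(n).
\end{equation*}

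For the first strict inequality $p^{\en}_{\od}(n)<p^{\od}_{\en}(n)$, I would subtract the last two identities to obtain
\begin{equation*}
p^{\od}_{\en}(n)-p^{\en}_{\od}(n)=\bigl(p^{\ed}_{\od}(n)-p^{\od}_{\ed}(n)\bigr)+\bigl(p^{\od}_{\eu}(n)-p^{\eu}_{\od}(n)\bigr),
\end{equation*}
and observe that the first summand is positive by the chain inequality $p_{\ed}^{\od}(n)<p^{\ed}_{\od}(n)$ and the second by $p^{\eu}_{\od}(n)<p_{\eu}^{\od}(n)$, both valid for $n$ sufficiently large. For the second inequality $p^{\od}_{\en}(n)<p^{\on}_{\ed}(n)$, the calculation telescopes even more cleanly:
\begin{equation*}
p^{\on}_{\ed}(n)-p^{\od}_{\en}(n)=\bigl(p^{\ou}_{\ed}(n)-p^{\od}_{\ed}(n)\bigr)-\bigl(p^{\od}_{\eu}(n)-p^{\od}_{\ed}(n)\bigr)=p^{\ou}_{\ed}(n)-p^{\od}_{\eu}(n),
\end{equation*}
which is positive for large $n$ by the chain inequality $p_{\eu}^{\od}(n)<p_{\ed}^{\ou}(n)$.

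There is no real obstacle here once the inclusion--exclusion identities above are written down; the entire argument is algebraic bookkeeping built on top of~\eqref{chain}. The only subtlety worth flagging is the notational convention that $p^{\wx}_{\yz}(n)$ and $p_{\yz}^{\wx}(n)$ denote the \emph{same} count, since the superscript always marks the larger class of parts regardless of left/right typesetting; once that is kept straight, each difference on the right-hand side above matches exactly one link of the chain, and no further work is required.
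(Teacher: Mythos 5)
Your proof is correct and follows essentially the same route as the paper: the same three inclusion--exclusion identities expressing each ``nd'' count as a ``u'' count minus a ``d'' count, followed by the same algebraic recombination so that each difference is positive by the links $p_{\ed}^{\od}(n)<p^{\ed}_{\od}(n)$, $p^{\eu}_{\od}(n)<p_{\eu}^{\od}(n)$, and $p_{\eu}^{\od}(n)<p_{\ed}^{\ou}(n)$ of \eqref{chain}. Your closing remark about the superscript always denoting the larger parts (so sub/superscript order in the typesetting is immaterial) is also consistent with the paper's conventions.
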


Moreover, to prove \eqref{l-1} combinatorially it suffices  to  find an injection from $\mathcal P^{\od}_{\en}(n)$ into $\mathcal P^{\on}_{\ed}(n)$.\smallskip

Lastly, consider the inequalities
$p^{\eu}_{\od}(n)  <p_{\eu}^{\od}(n)
<p_{\ed}^{\ou}(n)<p_{\eu}^{\ou}(n)  <p^{\eu}_{\ou}(n)$ and calculate several differences:

\begin{itemize}
\item $p^{\eu}_{\ou}(n) - p^{\eu}_{\od}(n) = p^{\eu}_{\on}(n),$
\smallskip

\item $p^{\ou}_{\eu}(n) - p^{\od}_{\eu}(n) = p^{\on}_{\eu}(n),$
\smallskip

\item $p^{\ou}_{\eu}(n) - p^{\ou}_{\ed}(n)  = p^{\ou}_{\en}(n),$
\smallskip

\item $p^{\eu}_{\ou}(n) - p^{\ed}_{\ou}(n)  = p^{\en}_{\ou}(n).$
\end{itemize}
Therefore $$p^{\on}_{\eu}(n)- p^{\ou}_{\en}(n)=p^{\ou}_{\eu}(n) - p^{\od}_{\eu}(n)-(p^{\ou}_{\eu}(n) - p^{\ou}_{\ed}(n))= p^{\ou}_{\ed}(n)-p^{\od}_{\eu}(n)>0$$
and
\begin{align*}p^{\eu}_{\on}(n)- p^{\on}_{\eu}(n)& = p^{\eu}_{\ou}(n) - p^{\eu}_{\od}(n)-( p^{\ou}_{\eu}(n) - p^{\od}_{\eu}(n))\\ & =(p^{\eu}_{\ou}(n)-p^{\ou}_{\eu}(n))+(p^{\od}_{\eu}(n)-p^{\eu}_{\od}(n)))>0\end{align*}

This leads to the following inequalities. 
\begin{proposition} \label{p6} For $n$ sufficiently large, $$p^{\ou}_{\en}(n) < p^{\on}_{\eu}(n) < p^{\eu}_{\on}(n).$$
    
\end{proposition}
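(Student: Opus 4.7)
The plan is to obtain both strict inequalities directly from the four difference identities listed just before the proposition, together with two strict inequalities extracted from \eqref{chain}; no new combinatorial construction is needed. The whole argument is essentially the algebraic manipulation already sketched in the preamble to the proposition, with the positivity of the resulting quantities supplied by \eqref{chain}.

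First, for $p^{\ou}_{\en}(n) < p^{\on}_{\eu}(n)$, I would combine the two listed identities
$p^{\on}_{\eu}(n) = p^{\ou}_{\eu}(n) - p^{\od}_{\eu}(n)$ and $p^{\ou}_{\en}(n) = p^{\ou}_{\eu}(n) - p^{\ou}_{\ed}(n)$
to telescope
$$p^{\on}_{\eu}(n) - p^{\ou}_{\en}(n) = p^{\ou}_{\ed}(n) - p^{\od}_{\eu}(n).$$
Rewriting in the equivalent subscript/superscript convention, the right side is $p_{\ed}^{\ou}(n) - p_{\eu}^{\od}(n)$, which is positive for all sufficiently large $n$ by \eqref{chain} (indeed, by the chain of our inequalities \eqref{i3} and \eqref{i4} proved in Section \ref{ineq}, positivity holds for $n\geq 5$).

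Second, for $p^{\on}_{\eu}(n) < p^{\eu}_{\on}(n)$, I would compute
\begin{align*}
p^{\eu}_{\on}(n) - p^{\on}_{\eu}(n) &= \bigl(p^{\eu}_{\ou}(n) - p^{\eu}_{\od}(n)\bigr) - \bigl(p^{\ou}_{\eu}(n) - p^{\od}_{\eu}(n)\bigr) \\
&= \bigl(p^{\eu}_{\ou}(n) - p^{\ou}_{\eu}(n)\bigr) + \bigl(p^{\od}_{\eu}(n) - p^{\eu}_{\od}(n)\bigr).
\end{align*}
Both summands are strictly positive for $n$ large enough: the first by the final inequalities in \eqref{chain}, giving $p^{\ou}_{\eu}(n) = p_{\eu}^{\ou}(n) < p^{\eu}_{\ou}(n)$; the second by the third inequality in \eqref{chain}, giving $p^{\eu}_{\od}(n) = p_{\od}^{\eu}(n) < p_{\eu}^{\od}(n) = p^{\od}_{\eu}(n)$. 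Adding, the displayed difference is positive, completing the second inequality.

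There is no substantive obstacle here; the argument is pure bookkeeping once one identifies $p^{\ou}_{\eu} = p_{\eu}^{\ou}$, $p^{\od}_{\eu} = p_{\eu}^{\od}$, $p^{\eu}_{\od} = p_{\od}^{\eu}$, and $p^{\ou}_{\ed} = p_{\ed}^{\ou}$. The qualifier ``for $n$ sufficiently large'' is inherited from the asymptotic result \eqref{chain} of \cite{BCN}; explicit thresholds could be recovered from the injections $\varphi_2$ and $\varphi_3$ in Section \ref{ineq}, but this refinement is not needed for the statement of Proposition \ref{p6}.
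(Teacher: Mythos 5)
Your argument is the same as the paper's: both inequalities follow by telescoping the listed difference identities and invoking positivity of $p^{\ou}_{\ed}(n)-p^{\od}_{\eu}(n)$ and of $(p^{\eu}_{\ou}(n)-p^{\ou}_{\eu}(n))+(p^{\od}_{\eu}(n)-p^{\eu}_{\od}(n))$ from the chain \eqref{chain}, valid for sufficiently large $n$. That part is correct and is exactly how the proposition is established in the text.

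One parenthetical claim is wrong, though it does not affect the proposition as stated. The positivity you need in the first step is $p_{\eu}^{\od}(n) < p_{\ed}^{\ou}(n)$, which is inequality \eqref{l-1}; it is not a consequence of \eqref{i3} ($p_{\od}^{\eu}(n) < p_{\ed}^{\ou}(n)$) together with \eqref{i4} ($p_{\eu}^{\od}(n) < p_{\ou}^{\ed}(n)$), since chaining those would require $p_{\eu}^{\od}(n)\leq p_{\od}^{\eu}(n)$ or $p_{\ou}^{\ed}(n)\leq p_{\ed}^{\ou}(n)$, both of which go the wrong way in \eqref{chain}. Likewise the second step uses \eqref{l-2}. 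The paper explicitly lists \eqref{l-1} and \eqref{l-2} among the inequalities for which no combinatorial proof (hence no explicit threshold) is available, so the injections $\varphi_2$ and $\varphi_3$ cannot supply an effective bound such as $n\geq 5$; only the asymptotic statement from \cite{BCN} is available, which is why the proposition is phrased for $n$ sufficiently large.
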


It would be interesting to find combinatorial proofs for the inequalities in Propositions \ref{p4}--\ref{p6}.

\end{document}